\numberwithin{equation}{section}
\newtheorem{theorem}{Theorem}[section]
\newtheorem{Lemma}[theorem]{Lemma}
\newtheorem{proposition}[theorem]{Proposition}
\theoremstyle{definition}
\newtheorem{Definition}[theorem]{Definition}
\newtheorem{corollary}[theorem]{Corollary}
\theoremstyle{definition}
\newtheorem{remark}[theorem]{Remark}
\numberwithin{equation}{section}
\newcommand{\lc}
{\mathrel{\raise2pt\hbox{${\mathop<\limits_{\raise1pt\hbox
{\mbox{$\sim$}}}}$}}}
\newcommand{\gc}
{\mathrel{\raise2pt\hbox{${\mathop>\limits_{\raise1pt\hbox{\mbox{$\sim$}}}}$}}}
\newcommand{\ec}
{\mathrel{\raise2pt\hbox{${\mathop=\limits_{\raise1pt\hbox{\mbox{$\sim$}}}}$}}}
\def\bb{\begin{equation}} \def\ee{\end{equation}}
\def\beqn{\begin{eqnarray}}  \def\eqn{\end{eqnarray}}
\def\beqnx{\begin{eqnarray*}} \def\eqnx{\end{eqnarray*}}
\def\bn{\begin{enumerate}} \def\en{\end{enumerate}}
\def\bd{\begin{description}} \def\ed{\end{description}}
\def\label{\label}
\renewcommand{\leq}{\leqslant}
\renewcommand{\geq}{\geqslant}
\title{A uniform bound on costs of controlling semilinear heat equations on a sequence of increasing domains and its application}
\author{
Lijuan Wang\thanks{School of
Mathematics and Statistics, Wuhan University; Computational Science Hubei Key Laboratory,
Wuhan University, Wuhan 430072, China;
e-mail: ljwang.math@whu.edu.cn.},
\quad Can Zhang
\thanks{Corresponding author. School of Mathematics and Statistics, Wuhan University; Computational Science Hubei Key Laboratory,
Wuhan University, Wuhan 430072, China;
e-mail: canzhang@whu.edu.cn.}}
\begin{document}

\date{}

\maketitle

\begin{abstract}
In this paper, we first prove a uniform upper bound on costs of null controls for semilinear heat equations with globally Lipschitz nonlinearity on a sequence of increasing domains, where the controls are acted on an equidistributed set that spreads out in the whole Euclidean space $\mathbb R^N$.
As an application, we then show the exactly null controllability for this semilinear heat equation in  $\mathbb R^N$.
The main novelty here is that the upper bound on costs of null controls for such kind of equations in large but bounded domains can be made uniformly with respect to the sizes of domains under consideration. The latter is crucial when one uses a suitable approximation argument to derive the global null controllability for the semilinear heat equation in $\mathbb R^N$.
This allows us to overcome the well-known problem of the lack of compactness embedding arising in the study of null controllability for nonlinear PDEs in generally unbounded domains.
\end{abstract}

\medskip

\noindent\textbf{2010 Mathematics Subject Classifications.}
35K05, 93B07, 93C20

\medskip

\noindent\textbf{Keywords.}
Semilinear heat equation, null controllability, uniform cost, equidistributed set

\section{Introduction and main results}

This paper is concerned with the null control costs for semilinear heat equations on a sequence of increasing bounded  domains in $\mathbb R^N$ (with $N\in\mathbb N$), when the controls act on the interior subsets of these domains.  Generally speaking, the null control costs depend on the geometry of both control regions and whole domains where the equations evolve.
Nevertheless, the goal of this paper is to investigate the uniform upper estimate for null control costs with respect to
the varying domains.  As an interesting  application, we shall derive
the null controllability of the semilinear heat equation in $\mathbb R^N$.

The general formulation of the problem could be stated as follows. Let $T$ be a positive time and let $E$ be a subset of positive Lebesgue measure in $(0,T)$.
For each $n\in\mathbb N$, let
$\omega_n$ be a nonempty open subset of bounded Lipschitz domain $\Omega_n$ in $\mathbb R^N$.
Consider
the following semilinear heat equation:
\begin{equation}\label{10301}
\left\{
\begin{array}{lll}
\partial_t z_n-\Delta z_n+f(z_n)=\chi_{\omega_n}\chi_E  u_n&\mbox{in}&\Omega_n\times (0,T),\\
z_n=0&\mbox{on}&\partial\Omega_n\times (0,T),\\
z_n(0)=z_0|_{\Omega_n},
\end{array}\right.
\end{equation}
where $z_0\in L^2(\mathbb{R}^N)$,  $u_n=u_n(x,t)\in L^2(\Omega_n\times(0,T))$ is the control function,
$\chi_E$ and $\chi_{\omega_n}$
are characteristic functions of $E$ and control region $\omega_n$, respectively. Throughout the paper, we  suppose that
$f$ is a globally Lipschitz function.  Then, there exists a unique solution
$z_n\in C([0,T]; L^2(\Omega_n))\bigcap L^2(0,T;H_0^1(\Omega_n))$ for the
equation $\eqref{10301}$ (c.f., \cite{Fabre}, for instance).

Recall that for each $n\in\mathbb N$, the control system $\eqref{10301}$ is called  exactly null controllable at the time $T>0$, if the following statement is true:
For any $z_0\in L^2(\mathbb R^N)$, there is a control
$u_n\in L^2(0,T;L^2(\Omega_n))$ so that the corresponding solution $z_n$ to $\eqref{10301}$ satisfies that  $z_n(T)=0$ in $\Omega_n$. Furthermore, the null control cost is the least constant $C_n$ so that $\|u_n\|_{L^2(\Omega_n\times(0,T))}\leq C_n\|z_0\|_{L^2(\mathbb R^N)}$
holds for all $z_0\in L^2(\mathbb R^N)$.  As mentioned at the beginning, the constant $C_n$ may
depend on the geometry parameters  of $\Omega_n$ and $\omega_n$.

The question whether or not  a given control system is null controllable and obtaining upper bounds for the associated null control costs,
with respect to the time interval or control regions,
 are important topics in control theory, both for linear partial differential control equations and abstract linear control systems. We refer the reader to \cite{AEWZ,B,EV17,Mat,ph2,WangZhangZhang,wc,zxz} and references therein for a wider discussion on this activated research field.

Meanwhile, there are many  fascinating  works in the literature on the approximate or exact  null controllability for the semilinear heat equations in bounded domains; see, for instance, \cite{Fabre,FI,Phung-Wang-Zhang}.  Their proofs are usually divided into two parts:
(i) null controllability of the linearized system;
(ii) a fixed-point argument.
The globally null controllability was also proved for a class of nonlinearities for which blow-up phenomena may arise (see, e.g., \cite{B1,fz,LB20}).

To the best of our knowledge, there are very few works on establishing a uniform upper bound for the null control cost for a partial differential controlled equation in varying domains.

In this paper, we shall restrict the control on an equidistributed set.  
We say a set as an equidistributed set in $\mathbb R^N$ if it contains a union of suitably distributed balls of fixed radius. Recently, there are many beautiful existing results on the quantitative unique continuation for
general elliptic operators on  equidistributed sets (c.f., \cite{Mat,SV20} and references therein).
Meanwhile,  we will consider the varying domains $\Omega_n$ as an approximation of $\mathbb R^N$, for the simplicity.

In order to state our main results, we first introduce certain standard notations.
For each $r>0$ and $x_{0}\in \mathbb{R}^{N}$,
$B_{r}(x_{0})$ stands for the closed ball centered  at $x_{0}$  and of radius $r$;
$Q_{r}(x_{0})$ denotes the smallest closed cube centered at $x_0$
so that $B_{r}(x_{0})\subset Q_{r}(x_{0})$;
$\mathrm{int}(Q_{r}(x_{0}))$ is  the interior of $Q_{r}(x_{0})$.

Let $0<r_1<r_2<\infty$. The following three assumptions will be effective throughout the paper:

$(H_1)$.  There is a sequence
$\{x_i\}_{i=1}^\infty\subset\mathbb R^N$ so that $\mathbb{R}^{N}= \displaystyle{\bigcup_{i=1}^\infty}Q_{r_{2}}(x_{i})$,
$\mathrm{int}(Q_{r_{2}}(x_{i}))\bigcap \mathrm{int}(Q_{r_{2}}(x_{j}))=\emptyset$
for each $i\neq j$.
Moreover, $\omega\triangleq\displaystyle{\bigcup_{i=1}^\infty}\omega_{i}$, where
$\omega_{i}$ is an open set and $B_{r_{1}}(x_{i})\subset \omega_{i} \subset B_{r_{2}}(x_{i})$ for each $i\in\mathbb N$.

\medskip

$(H_2)$.  For each $n\in\mathbb N$, $\Omega_n\triangleq \mbox{int}\Big(\displaystyle{\bigcup_{i\in I_n}} Q_{r_2}(x_i)\Big)$ is convex,
with $\mbox{Card}\,(I_n)<\infty$. In addition,  $I_n\subsetneq I_m$ when $n<m$, and $\displaystyle{\bigcup_{n=1}^\infty}\Omega_n=\mathbb{R}^N$.

\medskip

$(H_3)$.  The function  $f: \mathbb{R}\rightarrow \mathbb{R}$  is globally Lipschitz continuous, i.e., $|f(s)-f(\tau)|\leq L|s-\tau|$ for all $s,\tau\in\mathbb R$ with some constant $L>0$, and satisfies
that $f(0)=0$.\\

With regarding to the assumption $(H_1)$, we may say that the set $\omega$ is an equidistributed set in $\mathbb R^N$. A particularly example  is a periodic arrangement of balls  (see Figure \ref{fig1} below).
\begin{figure}[h]
\centering
\includegraphics[width=3cm]{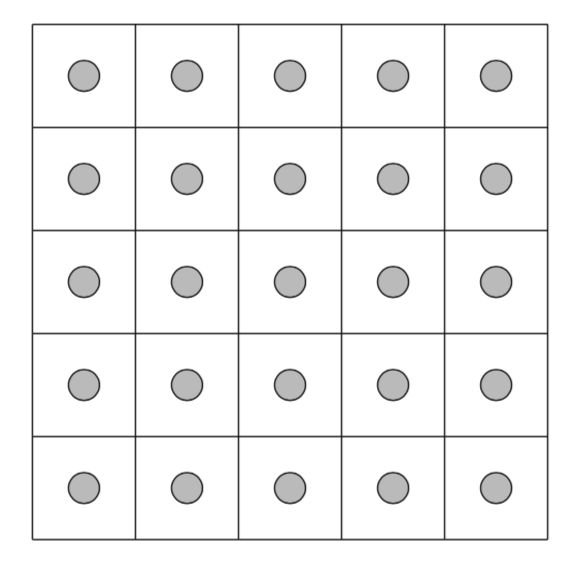}
\caption{Example}
\label{fig1}
\end{figure}
Very recently, we have proved in \cite{duan-wang-zhang} the observability inequality and null controllability  on such  kind of sets  for the linear heat equation with time and space dependent potentials in  $\mathbb R^N$.

The first main result of this paper concerning the uniform upper bound on costs of controlling a semilinear heat equation on increasing large domains can be stated as follows.

\begin{theorem}\label{null-3}
Let $(H_1), (H_2)$ and  $(H_3)$ hold. Let $T>0$ and $E$ be a subset of positive measure in $(0,T)$.
Then there is a positive constant $\kappa= \kappa(r_1,r_2,E,T,L)$ so that for any $n\in\mathbb N$ and any $z_0\in L^2(\mathbb R^N)$, there is a control $u_n\in L^2(\mathbb R^N\times (0,T))$, with the uniform bound
$$
\|u_n\|_{L^2(\mathbb R^N\times (0,T))}\leq \kappa\|z_0\|_{L^2(\mathbb R^N)},
$$
so that the corresponding solution $z_n\in C([0,T];L^2(\Omega_n))$ to the following semilinear  heat equation
\begin{equation*}
\left\{
\begin{array}{lll}
\partial_t z_n-\Delta z_n+f(z_n)=\chi_{\omega\cap\Omega_n} \chi_E u_n&\mbox{in}&\Omega_n\times (0,T),\\
z_n=0&\mbox{on}&\partial\Omega_n\times (0,T),\\
z_n(0)=z_0&\mbox{in}&\Omega_n,
\end{array}\right.
\end{equation*}
satisfies that $z_n(T)=0$ over $\Omega_n$.
\end{theorem}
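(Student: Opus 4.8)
The plan is to follow the classical two-step strategy for semilinear null controllability---linearization followed by a fixed-point argument---while tracking carefully that \emph{every} constant produced is independent of the index $n$. First I would linearize the nonlinearity at the origin. Since $f(0)=0$ and $f$ is globally Lipschitz, setting
\[
g(s)=\begin{cases} f(s)/s, & s\neq 0,\\ 0, & s=0,\end{cases}
\]
one has $f(s)=g(s)\,s$ with $|g(s)|\leq L$ for all $s\in\mathbb R$. Thus for any fixed $\xi\in L^2(\Omega_n\times(0,T))$ the potential $a_\xi(x,t):=g(\xi(x,t))$ satisfies $\|a_\xi\|_{L^\infty}\leq L$, and the semilinear equation is recast as the linear heat equation with potential $\partial_t y-\Delta y+a_\xi y=\chi_{\omega\cap\Omega_n}\chi_E v$, $y(0)=z_0$, $y|_{\partial\Omega_n}=0$.

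Second, the decisive ingredient is a \emph{uniform} null controllability estimate for this linear equation: there should exist $\kappa=\kappa(r_1,r_2,E,T,L)$, independent of $n$ and of the particular potential $a$ with $\|a\|_{L^\infty}\leq L$, such that for every $z_0$ there is a control $v$ with $\|v\|_{L^2}\leq\kappa\|z_0\|_{L^2}$ driving $y$ to rest at time $T$. By duality this is equivalent to an observability inequality for the adjoint heat equation observed on $(\omega\cap\Omega_n)\times E$, with observability constant uniform in $n$. This is exactly the type of estimate established on equidistributed sets in \cite{duan-wang-zhang}; the point is that the geometry $(H_1)$ forces the control region to remain equidistributed with fixed parameters $r_1<r_2$ no matter how large $\Omega_n$ is, so the quantitative unique continuation behind the observability does not degenerate as $\Omega_n\uparrow\mathbb R^N$. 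I would select $v=v_\xi$ canonically as the control of minimal $L^2$-norm (the HUM control obtained by minimizing the usual dual functional), which realizes the uniform bound.

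Third, for each fixed $n$ the domain $\Omega_n$ is bounded, so the solution operator gains the regularity $z\in L^2(0,T;H_0^1(\Omega_n))\cap H^1(0,T;H^{-1}(\Omega_n))$, which embeds compactly into $L^2(\Omega_n\times(0,T))$ by the Aubin--Lions lemma. Defining $\Lambda(\xi):=z_\xi$, the solution associated with the control $v_\xi$, the bound of the previous step shows that $\Lambda$ maps a closed ball of radius proportional to $\kappa\|z_0\|_{L^2}$ in $L^2(\Omega_n\times(0,T))$ into a fixed compact subset of itself; together with continuity (or closed-graph, convex-valued behaviour) of $\Lambda$, this yields a fixed point $z_n=\Lambda(z_n)$. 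By construction $z_n$ solves the semilinear system with $z_n(T)=0$, and the control $u_n=v_{z_n}$ obeys the $n$-independent bound $\|u_n\|_{L^2}\leq\kappa\|z_0\|_{L^2}$.

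The hard part will be twofold. The essential difficulty is the uniform-in-$n$ observability constant: one must verify that the constant in the observability inequality on $(\omega\cap\Omega_n)\times E$ can be taken independent of the size of $\Omega_n$, which is precisely where the equidistributed structure $(H_1)$ and the machinery of \cite{duan-wang-zhang} are indispensable; a domain-dependent constant would only reprove the already-known controllability on each $\Omega_n$ and would lose the crucial uniformity that underlies the application to $\mathbb R^N$. A secondary technical obstacle is the continuity of the fixed-point map $\Lambda$: the quotient $g$ need not be continuous at $s=0$, so $\xi\mapsto a_\xi$ is only weak-$*$ continuous in general. This is handled by now-standard techniques---using $L^2$-convergence of subsequences together with stability of the minimal-norm control under weak-$*$ convergence of uniformly bounded potentials (and the fact that $f(\xi_k)\to f(\xi)$ in $L^2$), or else replacing Schauder's theorem by a set-valued fixed-point theorem of Kakutani type---none of which affects the value or the $n$-independence of the constant $\kappa$.
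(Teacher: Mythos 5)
Your proposal follows essentially the same route as the paper's proof: rewrite $f(z)=a(z)z$ with $\|a\|_{\infty}\leq L$, invoke the uniform-in-$n$ observability inequality for the linear adjoint equation on the equidistributed set (Theorem \ref{Thm1}, proved by the frequency-function and telescoping-series machinery adapted from \cite{duan-wang-zhang}), deduce by duality the linear null controllability with cost $\kappa(r_1,r_2,E,T,L)$ independent of $n$ and of the potential (Corollary \ref{appli-2}), and close with a fixed point on an Aubin--Lions-compact set via the Kakutani--Fan--Glicksberg theorem applied to a set-valued map. You also correctly identify the uniformity of the observability constant as the essential point.

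The one soft spot is your treatment of the discontinuity of $g(s)=f(s)/s$ at $s=0$. Setting $g(0)=0$ and then ``replacing Schauder by a Kakutani-type theorem'' does not by itself repair it: the set-valued theorem still requires the graph of the fixed-point map to be closed, and that step needs $g(\xi_\ell)z_\ell\to g(\xi)z$ in $L^2(\Omega_n\times(0,T))$ along the approximating sequence. On the set $\{\xi=0\}$, which can have positive measure, $g(\xi_\ell)$ may oscillate within $[-L,L]$ while $z$ need not vanish there, so the limit equation can carry a wrong potential; your parenthetical remark that $f(\xi_\ell)\to f(\xi)$ in $L^2$ only controls $g(\xi_\ell)\xi_\ell$, not $g(\xi_\ell)z_\ell$ with $z_\ell\neq\xi_\ell$, and a weak-$*$ limit of $g(\xi_\ell)$ cannot in general be identified with $g(\xi)$ on $\{\xi=0\}$. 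The paper resolves this by a preliminary density (mollification) reduction to $f\in C^1$, after which $a(r)=f(r)/r$ for $r\neq 0$, $a(0)=f^\prime(0)$, is continuous on all of $\mathbb R$; then a.e.\ convergence of $\xi_\ell$ plus dominated convergence yields the needed strong convergence $a(\xi_\ell)z_\ell\to a(\xi)z$, and the closed-graph step goes through. So the mollification step is genuinely needed rather than cosmetic, and your proof should either include it or supply a substitute argument on $\{\xi=0\}$; with that amendment, your outline matches the paper's argument.
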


\begin{remark}
Our argument does not allow us to establish a similar result for  the global null controllability of slightly superlinearities as studied in \cite{B1,fz,LB20}.
\end{remark}

\begin{remark}
We do not know how to extend this result from an equidistributed set $\omega$ to a more general thick set.  We refer the reader to \cite[Remark 1.7]{duan-wang-zhang} for the difficulty.
\end{remark}

Another motivation of this paper is to establish the null controllability for the semilinear
heat equation in the whole space $\mathbb R^N$,  when the control is acted on an equidistributed set.
As already remarked in Remark 3.5 of our recent work \cite{duan-wang-zhang} on the null controllability for the linear heat equation with bounded potentials, the linearized plus fixed-point approach in \cite{Fabre,FI} cannot be directly applied in the framework of general unbounded domains  because of the lack of compactness of Sobolev's embedding.

The authors of \cite{TeresaDe} studied
the approximate controllability of a semilinear heat equation in an unbounded domain $\mathcal O$ of
$\mathbb R^N$, with control only acted in an open and nonempty subset, by an approximation method.  More precisely, they first considered the approximate controllability problem in bounded domains of the form $\mathcal O_n\triangleq\mathcal O\bigcap B_n$, where $B_n$ denotes the ball centered at the origin and of radius $n$; and then they showed that the controls proposed in
\cite{Fabre} restricted to $\mathcal O_n$ converge  in some sense to a desired approximate control in
$\mathcal O$, as $n$ goes to infinity. One of main ingredients in their proofs is a qualitative    unique continuation property for the linear  parabolic equation.  Nevertheless, the technical proof of \cite{TeresaDe} (see also \cite{ldt}) is not valid any more for the null controllability of a semilinear heat equation in $\mathbb R^N$.

It is also mentioning that the authors in \cite{CMZ} and \cite{Gon-De} proved the null controllability for some semilinear heat equations in an unbounded domain $\mathcal O$ of $\mathbb R^N$,  when the control is assumed to be distributed along a subdomain $\omega$ so that the uncontrolled region $\mathcal O\setminus\omega$
is bounded.  The arguments therein are mainly based on a Carleman estimate for the linear parabolic operator in bounded domains.

Recently, the authors in \cite{SV20} showed in a linear and  abstract framework  that if the sequence of null controls associated to an exhaustion of an unbounded domain converges, then the solutions converge in the same way to the limiting problem on the unbounded domain.
This result allows to infer the null controllability on unbounded domain by studying the control problem on a sequence of bounded domains. In  particular, it recovers the null controllability result for the linear heat equation in $\mathbb R^N$. The latter has been
already established well in \cite{EV17} and \cite{WangZhangZhang} independently.

Inspired by these works, we could utilize Theorem \ref{null-3}  to prove the null controllability for a semilinear heat equation in
$\mathbb{R}^N$ with the control acted on an equidistributed set $\omega$.
In fact,  since null controls
in Theorem \ref{null-3}
are uniformly bounded,  $\{u_n\}_{n\geq1}$ has a weakly convergent subsequence with a limit $u$ in $L^2(\mathbb R^N\times(0,T))$.  Thus, one may expect that such control maybe a null control for the corresponding semilinear
heat controlled equation in $\mathbb R^N$. Actually we have the following result:

\begin{theorem}\label{null-2}
Assume that $(H_1)$ and $(H_3)$ hold.  Let $T>0$ and $E$ be a subset of positive measure in $(0,T)$.
Then, for each initial value $y_0\in L^2(\mathbb R^N)$, there is a control
$u\in L^2(0,T;L^2(\mathbb{R}^N))$
with an upper  bound
$$\|u\|_{L^2(\mathbb R^N\times (0,T))}\leq \kappa\|y_0\|_{L^2(\mathbb R^N)},$$
so that the corresponding solution
\begin{equation}\label{null-wang}
\left\{
\begin{array}{lll}
\partial_t y-\Delta y+f(y)=\chi_\omega \chi_E u&\mbox{in}&\mathbb{R}^N\times (0,T),\\
y(0)=y_0&\mbox{in}&\mathbb{R}^N,
\end{array}\right.
\end{equation}
satisfies that $y(T)=0$ in $\mathbb{R}^N$.   Here $\kappa$ is the same constant as in Theorem \ref{null-3}.

\end{theorem}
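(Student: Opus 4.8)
The plan is to realize the desired solution $y$ as a limit of the controlled trajectories furnished by Theorem \ref{null-3}, exploiting that the associated null controls are bounded \emph{uniformly} in $n$. First I would apply Theorem \ref{null-3} with initial datum $z_0=y_0$ to obtain, for every $n\in\mathbb N$, a control $u_n\in L^2(\mathbb{R}^N\times(0,T))$ supported in $(\omega\cap\Omega_n)\times E$ with $\|u_n\|_{L^2(\mathbb{R}^N\times(0,T))}\leq\kappa\|y_0\|_{L^2(\mathbb{R}^N)}$, driving the solution $z_n$ of the equation on $\Omega_n$ to rest at time $T$. Extending $z_n$ by zero outside $\Omega_n$ (which is legitimate since $z_n\in L^2(0,T;H_0^1(\Omega_n))$), the uniform bound allows me to extract a subsequence with $u_n\rightharpoonup u$ weakly in $L^2(\mathbb{R}^N\times(0,T))$, and weak lower semicontinuity of the norm immediately yields the claimed estimate $\|u\|_{L^2(\mathbb{R}^N\times(0,T))}\leq\kappa\|y_0\|_{L^2(\mathbb{R}^N)}$.

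Next I would derive energy estimates for $z_n$ that are uniform in $n$. Testing the equation with $z_n$ and using $f(0)=0$ together with the global Lipschitz bound $|f(z_n)|\leq L|z_n|$ and the uniform control bound gives a Gronwall-type inequality, producing a bound on $\|z_n\|_{L^\infty(0,T;L^2(\mathbb{R}^N))}+\|z_n\|_{L^2(0,T;H^1(\mathbb{R}^N))}$ independent of $n$. From the equation itself, $\partial_t z_n=\Delta z_n-f(z_n)+\chi_\omega\chi_E u_n$ is then bounded in $L^2(0,T;H^{-1}(B_R))$ on every fixed ball $B_R$.

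The crux, and the point at which the lack of global compactness bites, is passing to the limit in the nonlinear term $f(z_n)$, which requires strong convergence of $z_n$. Since Sobolev embeddings are not compact on $\mathbb{R}^N$, I would argue locally: fix an exhausting sequence of balls $B_{R_j}\uparrow\mathbb{R}^N$. For each $j$ and all large $n$ we have $B_{R_j}\subset\Omega_n$, and $\{z_n\}$ is bounded in $L^2(0,T;H^1(B_{R_j}))$ with $\{\partial_t z_n\}$ bounded in $L^2(0,T;H^{-1}(B_{R_j}))$; the Aubin--Lions lemma then provides strong convergence in $L^2(0,T;L^2(B_{R_j}))$ along a subsequence. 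A diagonal extraction over $j$ produces a single subsequence, still denoted $z_n$, converging to some $y$ strongly in $L^2_{\mathrm{loc}}$, weakly in $L^2(0,T;H^1(\mathbb{R}^N))$, and almost everywhere. Because $f$ is Lipschitz, $|f(z_n)-f(y)|\leq L|z_n-y|$, so $f(z_n)\to f(y)$ strongly in $L^2((0,T)\times B_{R_j})$ for every $j$.

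Finally I would pass to the limit in the weak formulation tested against $\phi\in C_c^\infty(\mathbb{R}^N\times[0,T])$. For large $n$ the compact support of $\phi$ lies inside $\Omega_n$, so the boundary condition on $\partial\Omega_n$ plays no role; the weak convergence of $z_n$, $\nabla z_n$ and $u_n$, together with the strong local convergence of $f(z_n)$, lets me identify $y$ as a weak solution of \eqref{null-wang} with control $u$ and initial datum $y_0$, which by the global Lipschitz assumption is the unique such solution. To read off the terminal condition I would take $\phi(\cdot,T)=\psi\not\equiv0$: the integration-by-parts formula for $z_n$ carries the boundary term $\int_{\mathbb{R}^N}z_n(T)\psi$, which vanishes since $z_n(T)=0$, whereas the corresponding identity for the limit produces $\int_{\mathbb{R}^N}y(T)\psi$; comparing the two identities forces $\int_{\mathbb{R}^N}y(T)\psi=0$ for all $\psi$, whence $y(T)=0$ in $\mathbb{R}^N$. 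The main obstacle throughout is precisely the nonlinearity: only local, not global, strong compactness is available, and the entire argument is arranged so that compactly supported test functions confine every limit passage involving $f(z_n)$ to a fixed ball where Aubin--Lions applies.
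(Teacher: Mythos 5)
Your proposal is correct and follows essentially the same route as the paper: uniform control bounds from Theorem \ref{null-3}, zero-extension of the $z_n$, weak extraction of $u$ with lower semicontinuity of the norm, local strong compactness on balls (the paper localizes the $W^{1,2}(0,T;H^{-1})$ bound to $B_M(0)$ exactly as you do via Aubin--Lions) to pass to the limit in $f(z_n)$, identification of the limit as the unique weak solution via Lemma \ref{Existence}, and the duality test function $\varphi(x,t)=t\psi(x)/T$ to transfer $z_n(T)=0$ to $y(T)=0$. The only cosmetic difference is that you invoke Aubin--Lions and the diagonal extraction explicitly, which the paper leaves implicit in its reference to the argument of Lemma \ref{Existence}.
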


\begin{remark}
The well-posedness of such kind of semilinear heat equations in general unbounded domains is stated without a proof in \cite{CMZ}, for instance.  We refer the reader to Definition \ref{weak solution} and Lemma \ref{Existence} in Section \ref{b5} for precise presentations.
\end{remark}

\begin{remark}
Instead of $\mathbb R^N$, our considerations are valid as well for every set that can be
approximated with cubes such as the half space $\mathbb R^{N-1}\times\mathbb R^+$
and the infinite strip.
\end{remark}

\begin{remark}
Our method does not rely on any compactness argument in the whole space $\mathbb R^N$.
\end{remark}

The rest of this paper is organized as follows. Section \ref{b1} is devoted to prove the uniform bound of  control costs for the linearized controlled system.
Section \ref{b2} shows the proof of Theorem \ref{null-3} by a fixed-point argument.  Finally, Section \ref{b5} presents the proof of Theorem \ref{null-2}.

\medskip

\section{The linear case}\label{b1}

For each $n\in\mathbb N$, let $\varphi_n$ be the solution to
the following linear heat equation with a bounded potential $a \in L^\infty(\mathbb R^N\times(0,T))$:
\begin{equation}\label{1.1}
\left\{ \begin{array}{lll}
\partial_{t}\varphi_n-\Delta\varphi_n+a\varphi_n=0& \mathrm{in}& \Omega_n\times (0,T),\\
\varphi_n=0& \mathrm{on}& \partial\Omega_n\times (0,T),\\
\varphi_n(0)=\varphi_{0}& \mathrm{in}& \Omega_n,\\
\end{array}\right.\end{equation}
with $\varphi_0\in L^2(\mathbb R^N)$.

We first state a uniform observability inequality for all solutions of \eqref{1.1} evolving in $\Omega_n$ for all $n\in\mathbb N$.  Here and throughout this paper, we simply write $\|a\|_{\infty}=\|a\|_{L^{\infty}(\mathbb R^N\times(0,T))};$ and write
$C(\cdot)$  for a  positive constant depending on what are enclosed in the brackets.

\begin{theorem}\label{Thm1}
Assume that $(H_1)$ and $(H_2)$ hold. Let $T>0$ and $E$ be a subset of positive measure in $(0,T)$.
Then there are constants $C=C(r_{1}, r_{2})$ and
$\widetilde{C}=\widetilde{C}(r_{1}, r_{2},E)$
so that  the following  observability inequality
$$
\int_{\Omega_n} |\varphi_n(x,T)|^2\,\mathrm dx\leq
e^{\widetilde{C}}e^{C\left(T+T\|a\|_{\infty}+\|a\|_{\infty}^{2/3}\right)}
\int_E\int_{\omega\bigcap \Omega_n}|\varphi_n(x,t)|^{2}\mathrm dx\mathrm dt,\;\;\;\;\forall \varphi_0\in L^2(\mathbb R^N),
$$
holds uniformly for all $n\in\mathbb N$.
\end{theorem}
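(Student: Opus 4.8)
The plan is to follow the Apraiz--Escauriaza--Wang--Zhang (AEWZ) strategy, reducing the observability estimate to two ingredients: (a) a quantitative unique continuation (interpolation) inequality at a single time, holding \emph{uniformly in} $n$, and (b) a telescoping argument over a density point of the measurable set $E$. The announced form of the constant already suggests this route: the exponent $\|a\|_\infty^{2/3}$ is the standard threshold at which a bounded zeroth-order potential is absorbed into an elliptic/parabolic Carleman estimate, while the factor $T\|a\|_\infty$ records the energy growth of \eqref{1.1} over $[0,T]$. Throughout, uniformity in $n$ is forced by tracking that every constant depends only on the fixed scales $(r_1,r_2)$ and on $E$, never on $\mathrm{Card}(I_n)$.

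The first and central step is to establish, for each $n$ and all $0\leq t_1<t_2\leq T$, an interpolation inequality of the shape
\[
\|\varphi_n(\cdot,t_2)\|_{L^2(\Omega_n)}\leq \Big(C\,e^{C\|a\|_\infty^{2/3}}\,e^{C/(t_2-t_1)}\,\|\varphi_n(\cdot,t_2)\|_{L^2(\omega\cap\Omega_n)}\Big)^{\theta}\,\|\varphi_n(\cdot,t_1)\|_{L^2(\Omega_n)}^{1-\theta},
\]
with $C=C(r_1,r_2)$ and $\theta=\theta(r_1,r_2)\in(0,1)$ independent of $n$. I would derive this from a quantitative unique continuation / spectral (Lebeau--Robbiano type) bound for the Dirichlet problem on $\Omega_n$, in the spirit of \cite{Mat,SV20} and of our earlier work \cite{duan-wang-zhang}. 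Assumptions $(H_1)$ and $(H_2)$ are tailored precisely to make this uniform: since $\Omega_n$ is a finite union of congruent cubes $Q_{r_2}(x_i)$ and $\omega$ meets each one in a ball $B_{r_1}(x_i)$, the observation region is equidistributed inside $\Omega_n$ with parameters that do not degenerate as $n\to\infty$. The zeroth-order term $a\varphi_n$ is then absorbed into the leading Carleman terms at the usual $\|a\|_\infty^{2/3}$ scaling, which is where that exponent enters.

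Granting this, the second step incorporates the measurable-in-time set $E$. Fixing a density point $\ell\in(0,T)$ of $E$ and following the telescoping technique of \cite{AEWZ}, I would build a shrinking sequence of intervals accumulating at $\ell$ whose intersections with $E$ each carry a fixed proportion of measure, apply the interpolation inequality with $t_2$ chosen on $E$ inside each interval, and use the energy inequality $\|\varphi_n(\cdot,t_2)\|_{L^2(\Omega_n)}\leq e^{\|a\|_\infty(t_2-t_1)}\|\varphi_n(\cdot,t_1)\|_{L^2(\Omega_n)}$ to compare successive times. Summing the resulting recursive estimates, backward-uniqueness lets one replace the value at $\ell$ by the value at $T$; the accumulated energy factors produce the $e^{CT\|a\|_\infty}$ contribution, while the purely geometric losses from the density-point construction are absorbed into $\widetilde C=\widetilde C(r_1,r_2,E)$. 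Since the interpolation constant was uniform in $n$, so is the final observability bound.

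The main obstacle I anticipate is the uniformity of the interpolation inequality in the first step, namely arranging that its constant depend on $(r_1,r_2)$ alone and not on the number of cubes comprising $\Omega_n$. A naive iteration of local three-ball or Carleman estimates across the cubes would let the constant deteriorate with $n$, so the equidistributed structure must be exploited so that each cube contributes the \emph{same} factor and the propagation of smallness from cube to cube does not accumulate. Controlling the influence of the Dirichlet condition on $\partial\Omega_n$ uniformly, and verifying that the convexity of $\Omega_n$ imposed in $(H_2)$ is exactly what keeps the geometric constants from degrading along the growing boundary, is where the real work lies.
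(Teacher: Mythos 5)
Your overall architecture coincides with the paper's: Step~2 of your plan (density point of $E$, shrinking intervals carrying a fixed proportion of $|E|$, the energy factor $e^{T\|a\|_\infty}$, recursive summation) is essentially the paper's proof in Section~\ref{pro}, which follows \cite{AEWZ} and \cite{Phung-Wang} exactly as you describe, and your Step~1 target inequality is the paper's Proposition~\ref{2.3} in Young-inequality form, i.e.\ \eqref{2019-7-9}. The genuine gap is that you leave Step~1 --- which you yourself identify as ``where the real work lies'' --- unproved, and the two mechanisms you name for it do not straightforwardly work here. A Lebeau--Robbiano spectral bound is tied to the time-independent operator $-\Delta$ and cannot accommodate the potential $a=a(x,t)\in L^\infty(\mathbb R^N\times(0,T))$ of \eqref{1.1}, whose time dependence is essential downstream (in Section~\ref{b2} the fixed-point argument produces the potential $a(\xi(x,t))$). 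A parabolic Carleman estimate, on the other hand, yields a space-time integrated observation, not the single-time interpolation inequality $\|\varphi_n(t_2)\|_{L^2(\Omega_n)}\leq \bigl(\cdots\|\varphi_n(t_2)\|_{L^2(\omega\cap\Omega_n)}\bigr)^{\theta}\|\varphi_n(t_1)\|_{L^2(\Omega_n)}^{1-\theta}$ that the telescoping over a merely measurable $E$ requires; moreover, constructing Carleman weights with critical points confined to $\omega\cap\Omega_n$, uniformly over a growing union of cubes, is precisely the kind of construction you would then have to supply.

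The paper's actual device is different: the monotonicity of the parabolic frequency function in bounded \emph{convex} domains (Proposition~\ref{lemma-2.1}, from \cite{Phung-Wang-1,cz1}) gives a local two-ball, one-time quantitative unique continuation estimate near each lattice point $x_i$, with constants depending only on the radii and $\delta$ (Proposition~\ref{lemma-2.2}); this is exactly where the convexity of $\Omega_n$ in $(H_2)$ enters (it controls the sign of the boundary terms in the frequency-function computation), so your guess that convexity ``keeps the geometric constants from degrading along the growing boundary'' is right in spirit but attached to the wrong mechanism. The passage from the local estimates to the global inequality \eqref{3.33333} is also not a propagation-of-smallness chain: the paper applies Young's inequality to linearize each local H\"older-type estimate, sums over $i\in I_n$ using only the finite overlap of the enlarged cubes $Q_{4\sqrt N r_2}(x_i)$ (the dimensional constant $\mathcal{\widehat K}_3$), and then re-optimizes in $\varepsilon$ to recover the product form. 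You correctly anticipated that ``each cube contributes the same factor,'' but this linearize--sum--reoptimize step is the concrete missing idea; without it, or without the frequency-function local estimate it rests on, your Step~1 remains an announcement rather than a proof. (A minor point: no backward uniqueness is needed to pass from the density point to time $T$; the forward energy estimate $\|\varphi_n(T)\|_{L^2(\Omega_n)}\leq e^{T\|a\|_\infty}\|\varphi_n(t)\|_{L^2(\Omega_n)}$ for $t\leq T$ suffices, as in the paper.)
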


\begin{remark}
Note that the two constants in the above theorem are independent of the sizes of  domains.
\end{remark}

\begin{remark}\label{Thm2}
Under the same assumptions as in Theorem \ref{Thm1}, the following refined estimate is also true:
There are constants $C=C(r_{1}, r_{2})$ and
$\widetilde{C}=\widetilde{C}(r_{1}, r_{2},E)$
so that  the $L^1$-type observability inequality
$$
\|\varphi_n(T)\|_{L^2(\Omega_n)}\leq
e^{\widetilde{C}}e^{C\left(T+T\|a\|_{\infty}+\|a\|_{\infty}^{2/3}\right)}
\int_E \int_{\omega\bigcap \Omega_n} |\varphi_n(x,t)|\mathrm dx\mathrm dt,\;\;\;\;\forall \varphi_0\in L^2(\mathbb R^N),
$$
holds uniformly for all $n\in\mathbb N$.
\end{remark}

\medskip

As a direct consequence of Theorem~\ref{Thm1}, we present the linear version of Theorem \ref{null-3} as follows.
Consider the following linear control system:
\begin{equation}\label{appli-1}
\left\{ \begin{array}{lll}
\partial_{t}z_n-\Delta z_n+az_n=\chi_{\omega\bigcap \Omega_n}\chi_{E}u_n&\mbox{in}&\Omega_n\times (0,T),\\
z_n=0&\mbox{on}&\partial\Omega_n\times (0,T),\\
z_n(0)=z_{0}&\mbox{in}&\Omega_n,\\
\end{array}\right.\end{equation}
where $z_{0}\in L^2(\mathbb R^N)$ is an initial state and
$u_n\in L^{2}(\Omega_n\times(0,T))$  is a control.
For each $n\in\mathbb N$,  we write $z_n(\cdot; z_0, u_n)$ for the solution to (\ref{appli-1}).
By a standard duality method (see, for instance, \cite[Theorem 2.42]{Coron}),
we can easily obtain the following null controllability result with a uniform upper bound on control costs.

\begin{corollary}\label{appli-2} Under the same assumptions of  Theorem~\ref{Thm1},
for each $n\in\mathbb N$ and  $z_{0}\in L^2(\mathbb R^N)$, there is a control $u_n\in L^{2}(0,T;L^2(\mathbb{R}^N))$, with
a uniform cost
\begin{equation*}
\|u_n\|_{L^2(0,T;L^2(\mathbb{R}^N))}\leq
e^{\widetilde{C}} e^{C\left(T+T\|a\|_{\infty}+\|a\|_{\infty}^{2/3}\right)}\|z_{0}\|_{L^{2}(\mathbb R^N)},
  \end{equation*}
where the constants $C$ and $\widetilde{C}$ are given by  Theorem~\ref{Thm1}, so that $z_n(T; z_{0},u_n)=0$ in $\Omega_n$.
\end{corollary}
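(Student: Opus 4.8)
The plan is to deduce Corollary~\ref{appli-2} from the observability inequality of Theorem~\ref{Thm1} by the classical duality (HUM) argument, while tracking the constants so that the uniformity in $n$ is preserved. First I would fix $n\in\mathbb N$ and identify the adjoint of the control system \eqref{appli-1}. Since the heat operator $\partial_t-\Delta$ is not symmetric in time, the adjoint of \eqref{appli-1} is the backward equation $-\partial_t\varphi_n-\Delta\varphi_n+a\varphi_n=0$ in $\Omega_n\times(0,T)$ with $\varphi_n=0$ on $\partial\Omega_n\times(0,T)$ and terminal data $\varphi_n(T)=\varphi_T$. Performing the time reversal $t\mapsto T-t$ turns this into a forward problem of the form \eqref{1.1} with potential $\tilde a(x,t)=a(x,T-t)$; since $\|\tilde a\|_\infty=\|a\|_\infty$, Theorem~\ref{Thm1} applies verbatim and yields the observability inequality for the adjoint solutions with exactly the same constant $e^{\widetilde C}e^{C(T+T\|a\|_\infty+\|a\|_\infty^{2/3})}$, which is independent of $n$.

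Next I would run the standard duality argument for each fixed $n$. Because $z_0\in L^2(\mathbb R^N)$ enters \eqref{appli-1} only through its restriction to $\Omega_n$ and $\|z_0\|_{L^2(\Omega_n)}\le\|z_0\|_{L^2(\mathbb R^N)}$, it suffices to control the state on $\Omega_n$. Following \cite[Theorem 2.42]{Coron}, I would introduce the quadratic functional
\begin{equation*}
J_n(\varphi_T)=\frac12\int_E\int_{\omega\cap\Omega_n}|\varphi_n(x,t)|^2\,\mathrm dx\,\mathrm dt+\int_{\Omega_n}z_0(x)\,\varphi_n(x,0)\,\mathrm dx
\end{equation*}
over $\varphi_T\in L^2(\Omega_n)$, where $\varphi_n$ denotes the adjoint solution with terminal value $\varphi_T$. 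The observability inequality is precisely the coercivity estimate needed to guarantee that $J_n$ is continuous, strictly convex and coercive, hence admits a unique minimizer $\hat\varphi_T$; its Euler--Lagrange equation produces the control $u_n=\chi_{\omega\cap\Omega_n}\chi_E\hat\varphi_n$, extended by zero to all of $\mathbb R^N$, which steers $z_0$ to $z_n(T)=0$ in $\Omega_n$.

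The cost estimate then drops out of the minimality of $\hat\varphi_T$: comparing $J_n(\hat\varphi_T)\le J_n(0)=0$ with the observability inequality gives $\|u_n\|_{L^2(\mathbb R^N\times(0,T))}\le C_{\mathrm{obs}}\,\|z_0\|_{L^2(\mathbb R^N)}$, where $C_{\mathrm{obs}}$ is the square root of the constant appearing in Theorem~\ref{Thm1}. Relabelling the admissible constants $C,\widetilde C$ (which retain their dependence only on $r_1,r_2,E$) to absorb the factor $1/2$ in the exponents yields the bound stated in the corollary, and since $C_{\mathrm{obs}}$ is independent of $n$, the resulting cost is uniform. I do not expect a serious obstacle here, as the statement is a textbook duality consequence; the only points demanding care are the time reversal that transfers Theorem~\ref{Thm1} to the backward adjoint with an unchanged constant, and the bookkeeping ensuring that the observability constant---and hence the control cost---genuinely does not depend on $n$, which is exactly what the $n$-independence asserted in Theorem~\ref{Thm1} supplies.
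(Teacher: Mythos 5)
Your proposal is correct and is precisely the ``standard duality method'' that the paper itself invokes without detail (it simply cites \cite[Theorem 2.42]{Coron}), so you have essentially reproduced the paper's proof while tracking the $n$-independence of the constants. Two small points to tidy in a polished write-up: after the time reversal the set $E$ becomes its reflection $\{T-t:\,t\in E\}$, to which Theorem \ref{Thm1} applies with a constant of the same admissible form (not literally ``exactly the same constant''), and $J_n$ is coercive only with respect to the observation seminorm $\bigl(\int_E\int_{\omega\cap\Omega_n}|\varphi_n|^2\,\mathrm dx\,\mathrm dt\bigr)^{1/2}$ rather than the $L^2(\Omega_n)$-norm, so the minimizer must be sought in the completion of $L^2(\Omega_n)$ under this seminorm (or one invokes the abstract duality of \cite[Theorem 2.42]{Coron} directly), and the minimality comparison then yields $\|u_n\|\leq 2C_{\mathrm{obs}}\|z_0\|_{L^2(\mathbb R^N)}$, with the factor $2$ absorbed into $\widetilde C$ as you indicate.
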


\subsection{Quantitative estimates of unique continuation}\label{pre}

Before giving the proof of Theorem~\ref{Thm1}, we  present the following quantitative unique continuation
property for all  solutions of \eqref{1.1}.
\begin{proposition}\label{lemma-2.2}
Let $(H_2)$ hold.
Let $0<r<R<\infty$ and  $\delta\in(0,1]$.
Then there are a universal constant $C>0$ and three positive constants $C_{1}\triangleq C_{1}(R,\delta), C_{2}\triangleq C_{2}(R,\delta)$ and $\gamma\triangleq\gamma(r,R,\delta)\in (0,1)$ so that for any $n\in\mathbb N$, any $x_{0}\in \Omega_n$, and any $\varphi_{0}\in L^2(\Omega_n)$,
the solution $\varphi_n$ of (\ref{1.1}) satisfies
\begin{eqnarray*}
\int_{B_{R}(x_{0})\bigcap \Omega_n}\!\!\!\!|\varphi_n(x,T)|^{2}\mathrm{d}x&\!\!\leq&\!\!\!\!
\left[C_{1}e^{[1+2C(1+\frac{1}{R^2})](1+\frac{4}{T}+\|a\|^{2/3}_{\infty})
+\frac{C_{2}}{T}+2T\|a\|_{\infty}}\int_{T/2}^{T}\int_{Q_{2R_{0}}(x_{0})\bigcap \Omega_n}\!\!\varphi_n^{2}(x,t)\mathrm{d}x\mathrm{d}t\right]^{\gamma}\\
\\
&&\times\left(2\int_{B_{r}(x_0)\bigcap \Omega_n}|\varphi_n(x,T)|^{2}\mathrm{d}x\right)^{1-\gamma},
\end{eqnarray*}
where $R_{0}\triangleq(1+2\delta)R$.
\end{proposition}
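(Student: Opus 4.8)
The plan is to read this as a Hadamard three-ball interpolation at the final time, propagating smallness from the small ball $B_r(x_0)$ to the intermediate ball $B_R(x_0)$, with the overflow absorbed into the space-time integral on the larger cylinder $Q_{2R_0}(x_0)\times(T/2,T)$. Accordingly I would split the argument into two independent pieces: (A) a fixed-time three-ball inequality comparing the $L^2$-masses of $\varphi_n(\cdot,T)$ on $B_r$, $B_R$ and $B_{2R_0}$ (all intersected with $\Omega_n$); and (B) a local parabolic energy estimate replacing the final-time mass on $B_{2R_0}$ by the space-time integral on $Q_{2R_0}$. Combining the two and collecting constants should produce the stated inequality, with each term in the exponent identified with the mechanisms below.

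For Step (A) I would use a parabolic (Almgren--Poon) frequency function centered at $x_0$ with a backward heat-kernel Gaussian weight $G(x,t)$ singular slightly after $t=T$. Writing $H(t)=\int_{\Omega_n}|\varphi_n|^2G$, $D(t)=\int_{\Omega_n}|\nabla\varphi_n|^2G$ and $N(t)=(\cdots)D/H$, a Rellich--Pohozaev differentiation of $N$ produces, besides an interior remainder, a boundary integral on $\partial\Omega_n$ of the form $\int_{\partial\Omega_n}(x-x_0)\cdot\nu\,|\partial_\nu\varphi_n|^2\,G\,\mathrm dS$; since $\varphi_n=0$ on $\partial\Omega_n$ the tangential gradient vanishes, and because $\Omega_n$ is convex with $x_0\in\Omega_n$ one has $(x-x_0)\cdot\nu\geq0$, so this term carries a favorable sign and may be discarded. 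This is precisely where $(H_2)$ enters, and, crucially, the sign holds for \emph{every} convex $\Omega_n$ irrespective of its size, which is what renders the resulting estimate uniform in $n$. The potential $a$ contributes only a lower-order perturbation, yielding near-monotonicity $N'\geq-C\|a\|_\infty(1+N)$; integrating and optimizing the free Gaussian scale against $\|a\|_\infty$ gives the $\|a\|_\infty^{2/3}$ contribution, while the Gaussian scaling and the length of the backward time window produce the $1/R^2$ and $1/T$ factors. The monotonicity then yields doubling estimates for $\rho\mapsto\int_{B_\rho(x_0)\cap\Omega_n}|\varphi_n(T)|^2$, which I would chain over the dyadic scales between $r$, $R$ and $2R_0$ to obtain the three-ball inequality with an exponent $\gamma=\gamma(r,R,\delta)\in(0,1)$.

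For Step (B) I would test the equation \eqref{1.1} against $\eta(x)^2\theta(t)^2\varphi_n$, where $\eta$ is a spatial cutoff equal to $1$ on $B_{2R_0}$ and supported in $Q_{2R_0}$, and $\theta$ is a temporal cutoff with $\theta(T/2)=0$, $\theta(T)=1$. Integrating by parts and using $\varphi_n|_{\partial\Omega_n}=0$ eliminates all boundary contributions, and a standard Caccioppoli absorption together with a Gronwall step in the $a$-term gives
\[
\int_{B_{2R_0}(x_0)\cap\Omega_n}|\varphi_n(T)|^2\,\mathrm dx
\leq Ce^{2T\|a\|_\infty}\Big(1+\tfrac1T+\tfrac1{R^2}\Big)\int_{T/2}^{T}\!\!\int_{Q_{2R_0}(x_0)\cap\Omega_n}|\varphi_n|^2\,\mathrm dx\,\mathrm dt,
\]
the $1/T$ and $1/R^2$ coming from $\theta'$ and $\nabla\eta$, and the $2T\|a\|_\infty$ from absorbing the potential. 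Inserting this bound into the $\gamma$-factor of the three-ball inequality and relabeling the constants yields the claim in the stated form.

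The main obstacle is Step (A): making the frequency-function computation fully rigorous for the Dirichlet solution on the convex domains $\Omega_n$, and in particular verifying the clean sign of the boundary term and tracking the dependence on $a$ so that the optimization produces exactly the $\|a\|_\infty^{2/3}$ scaling uniformly in $n$. Once the convexity sign is secured the remainder is bookkeeping, but it is this sign---together with the fact that the Gaussian weight localizes everything near $x_0$---that forces all constants to depend only on $r,R,\delta$ and not on the size of $\Omega_n$.
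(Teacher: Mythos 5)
Your overall machinery is the right one, and it matches the paper's: the paper proves Proposition \ref{lemma-2.2} by inserting the convex-domain frequency-function monotonicity of Proposition \ref{lemma-2.1} (from \cite{Phung-Wang-1,cz1}) into the argument of \cite[Lemma 3.2]{duan-wang-zhang}, and your two central observations --- that the Rellich--Pohozaev boundary term $\int_{\partial\Omega_n}(x-x_0)\cdot\nu\,|\partial_\nu u|^2G\,\mathrm dS$ has a favorable sign on a convex $\Omega_n$ regardless of its size (this is exactly what $(H_2)$ buys and why the constants are uniform in $n$), and that the $\|a\|_\infty^{2/3}$ arises from optimizing the free Gaussian parameter $\lambda$ against $\|a\|_\infty$ --- are precisely the mechanisms at work there.

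However, your decomposition into (A) a \emph{pure fixed-time} three-ball inequality among $B_r$, $B_R$, $B_{2R_0}$ at $t=T$, followed by (B) a Caccioppoli step, has a genuine gap: step (A) is not what the frequency method delivers, and it cannot be run as you describe. First, Proposition \ref{lemma-2.1} applies only to $u$ vanishing on all of $\partial(B_\rho(x_0)\cap\Omega_n)$, and $\varphi_n$ does not vanish on $\partial B_\rho(x_0)$; one must work with a cutoff $u=\chi\varphi_n$, whereupon $\partial_t u-\Delta u$ acquires the commutator $-2\nabla\chi\cdot\nabla\varphi_n-(\Delta\chi)\varphi_n$, supported in an annulus and present at \emph{all} times in the window. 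Its $G_\lambda$-weighted contribution is estimated by a space-time Caccioppoli bound over $(T/2,T)\times Q_{2R_0}(x_0)$ --- this is exactly how the cylinder integral enters the $\gamma$-bracket and where the margin $\delta$ in $R_0=(1+2\delta)R$ is consumed; it is not an afterthought that can be postponed to a separate step (B). Second, integrating the differential inequality for $N_{\lambda}$ requires an a priori bound on $N_\lambda$ at some earlier time $t_0<T$, which is again obtained by averaging over a time interval, not from data at $t=T$. Consequently the ``doubling constant'' for $\rho\mapsto\int_{B_\rho(x_0)\cap\Omega_n}|\varphi_n(T)|^2$ unavoidably involves the ratio of a space-time quantity to the fixed-time mass, and chaining it over dyadic scales reproduces the \emph{mixed} inequality of Proposition \ref{lemma-2.2}, not your statement (A). Note also that (A) is strictly stronger than the proposition: recovering it from the proposition would require a quantitative backward bound of $\int_{T/2}^T\int_{Q_{2R_0}}\varphi_n^2$ by $\int_{B_{2R_0'}}|\varphi_n(T)|^2$, which fails for the heat flow (take $\varphi_0$ a high-frequency Dirichlet eigenfunction), and for bounded, merely time-dependent potentials $a$ no fixed-time propagation-of-smallness result of this kind is available --- this is precisely why the proposition is stated in mixed form. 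The repair is structural and cheap: abandon the (A)/(B) separation, run the frequency argument directly on the cutoff solution, and let the commutator and initial-frequency terms produce the cylinder integral in one shot, as in \cite[Lemma 3.2]{duan-wang-zhang}; your convexity and $\lambda$-optimization points then carry over verbatim.
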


\begin{remark}
Note that the constants above are independent of the sizes of the domains $\Omega_n$.
\end{remark}

Here, we do not give the complete proof of Proposition \ref{lemma-2.2} since it is basically already done in \cite{duan-wang-zhang}.  We only point out the major  difference  compared with the proof of \cite[Lemma 3.2]{duan-wang-zhang}.  Indeed,  the key ingredient in the proof of \cite[Lemma 3.2]{duan-wang-zhang} is a monotonicity formula of parabolic frequency function in a bounded ball (i.e., \cite[Lemma 3.1]{duan-wang-zhang}).  Instead, the following analogous  monotonicity formula in a convex and bounded domain could be utilized  when one seeks for the detailed proof of Proposition \ref{lemma-2.2}.

\begin{proposition}\label{lemma-2.1}(\cite{Phung-Wang-1} or \cite{cz1})
Let $\Omega$ be a bounded and  convex subset in $\mathbb R^N$.
Let $r>0$, $\lambda>0$, $T>0$  and $x_{0}\in \Omega$. Denote by
$$
G_{\lambda}(x,t)\triangleq\frac{1}{(T-t+\lambda)^{N/2}}e^{-\frac{|x-x_{0}|^{2}}{4(T-t+\lambda)}},
\;\;t\in [0,T].
$$
For $u\in H^{1}(0,T; L^{2}(B_{r}(x_{0})\bigcap \Omega))\bigcap L^{2}(0,T; H^{2}(B_{r}(x_{0})\bigcap \Omega)
\bigcap H^{1}_{0}(B_{r}(x_{0})\bigcap \Omega))$ and $t\in (0,T],$ set
\begin{equation*}\label{3.111}
N_{\lambda,r}(t)\triangleq\frac{\int_{B_{r}(x_{0})\bigcap \Omega}|\nabla u(x,t)|^{2}G_{\lambda}(x,t)\mathrm{d}x}{\int_{B_{r}(x_{0})\bigcap \Omega}|u(x,t)|^{2}G_{\lambda}(x,t)\mathrm{d}x} \;\;\;\; \text{whenever}\;\; \int_{B_{r}(x_{0})\bigcap \Omega}|u(x,t)|^{2}\mathrm{d}x\neq0.
\end{equation*}
Then
\begin{equation*}
\frac{\mathrm{d}}{\mathrm{d}t}N_{\lambda,r}(t)\leq\frac{1}{T-t+\lambda}N_{\lambda,r}(t)
+\frac{\int_{B_{r}(x_{0})\bigcap \Omega}|(\partial_{t}u-\Delta u)(x,t)|^{2}G_{\lambda}(x,t)\mathrm{d}x}{\int_{B_{r}(x_{0})\bigcap \Omega}|u(x,t)|^{2}G_{\lambda}(x,t)\mathrm{d}x}.
\end{equation*}

\end{proposition}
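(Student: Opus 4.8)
The plan is to follow the by-now standard differentiation of a Gaussian-weighted parabolic frequency function, adapted to the convex domain $\Omega$, and to isolate the single genuinely new ingredient: a surface integral over $B_r(x_0)\cap\partial\Omega$ whose sign is fixed by convexity. Throughout write $s=s(t)=T-t+\lambda$, $\Omega'=B_r(x_0)\cap\Omega$, $g=\partial_t u-\Delta u$, and
$$H(t)=\int_{\Omega'}|u|^2 G_\lambda\,\mathrm dx,\qquad D(t)=\int_{\Omega'}|\nabla u|^2 G_\lambda\,\mathrm dx,$$
so that $N_{\lambda,r}=D/H$. I would first record the two elementary facts about the weight: $G_\lambda$ solves the backward heat equation $\partial_t G_\lambda+\Delta G_\lambda=0$, while $\nabla G_\lambda=-\frac{x-x_0}{2s}G_\lambda$ and $\nabla^2 G_\lambda=\big(-\frac{1}{2s}I+\frac{(x-x_0)\otimes(x-x_0)}{4s^2}\big)G_\lambda$.

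Next I would differentiate the denominator. Using $\partial_t G_\lambda=-\Delta G_\lambda$ and integrating by parts twice, with every boundary contribution vanishing because $u(\cdot,t)\in H^1_0(\Omega')$, one reaches the clean identity $H'(t)=-2D(t)+2\int_{\Omega'}u\,g\,G_\lambda\,\mathrm dx$. The heart of the matter is $D'(t)$. Here I would differentiate, integrate by parts once (the boundary term drops since $\partial_t u=0$ on $\partial\Omega'$), substitute $\partial_t u=\Delta u+g$, and then apply the Rellich–Ne\v{c}as–Pohozaev identity to the term $\int_{\Omega'}\Delta u\,(\nabla G_\lambda\cdot\nabla u)$. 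Since $u=0$ on $\partial\Omega'$ forces $\nabla u=(\partial_\nu u)\nu$ there, this generates exactly one surface integral, of the form $\tfrac1{2s}\int_{\partial\Omega'}((x-x_0)\cdot\nu)\,(\partial_\nu u)^2\,G_\lambda\,\mathrm d\sigma$, together with the principal interior term $-2\int_{\Omega'}|\partial_t u|^2 G_\lambda$, cross terms in $g$, and interior terms carrying $\nabla^2 G_\lambda$.

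Inserting the explicit Hessian of $G_\lambda$, the $\nabla^2 G_\lambda$ terms reorganize — using the pointwise Cauchy–Schwarz bound $((x-x_0)\cdot\nabla u)^2\le|x-x_0|^2|\nabla u|^2$ — so that, upon forming $N_{\lambda,r}'=D'/H-N_{\lambda,r}H'/H$ and combining with the $2N_{\lambda,r}^2$ coming from $H'=-2D+\cdots$, the dimension-dependent pieces cancel against the Gaussian drift and the interior remainder collapses into a non-positive weighted variance plus precisely the advertised leading term $\frac1s N_{\lambda,r}$. The terms involving $g$ are then handled by completing the square in the weighted inner product $\langle f,h\rangle=\int_{\Omega'}fh\,G_\lambda$, since $-2\langle\partial_t u,\partial_t u\rangle+2\langle\partial_t u,g\rangle\le\tfrac12\langle g,g\rangle$ and the drift cross term is absorbed similarly; bookkeeping the constants yields exactly the source term $\int_{\Omega'}|g|^2 G_\lambda/H$ on the right.

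The only place where the geometry of $\Omega$ enters — and the step I expect to be the main obstacle — is the surface integral over $\partial\Omega'=(\partial B_r(x_0)\cap\overline\Omega)\cup(B_r(x_0)\cap\partial\Omega)$. On the spherical part the outer normal is $\nu=(x-x_0)/r$, so $(x-x_0)\cdot\nu=r\ge0$ and this contribution is exactly the one appearing in the ball case \cite[Lemma 3.1]{duan-wang-zhang}. On the part lying in $\partial\Omega$, the convexity of $\Omega$ together with $x_0\in\Omega$ gives $(x-x_0)\cdot\nu\ge0$ as well, so the whole boundary integral carries the same definite sign as in the ball case and can be treated verbatim; this is precisely why the hypothesis that $\Omega$ be convex cannot be relaxed. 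With this term under control, the remaining estimates are those of \cite[Lemma 3.1]{duan-wang-zhang}, which I would import rather than reproduce.
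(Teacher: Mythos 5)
The paper does not actually prove this proposition --- it is imported verbatim from \cite{Phung-Wang-1} and \cite{cz1} --- and your reconstruction follows precisely the argument of those cited works: differentiate $H(t)=\int_{\Omega'}|u|^2G_\lambda$ and $D(t)=\int_{\Omega'}|\nabla u|^2G_\lambda$ using $\partial_tG_\lambda+\Delta G_\lambda=0$ and the explicit Hessian of $G_\lambda$, apply the Rellich--Ne\v{c}as identity with vector field $\nabla G_\lambda$, whose single boundary contribution $-\frac{1}{2(T-t+\lambda)}\int_{\partial(B_r(x_0)\cap\Omega)}\bigl((x-x_0)\cdot\nu\bigr)(\partial_\nu u)^2G_\lambda\,\mathrm d\sigma$ is non-positive exactly because $(x-x_0)\cdot\nu\geq 0$ on both boundary pieces (radially on $\partial B_r(x_0)$, and by convexity with $x_0\in\Omega$ on $B_r(x_0)\cap\partial\Omega$), which is indeed the sole place the geometry enters. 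Your sketch is correct in outline; the one point a full write-up must make explicit is that the negative square appearing in $D'$ is $-2\int_{\Omega'}\bigl|\partial_tu-\frac{(x-x_0)\cdot\nabla u}{2(T-t+\lambda)}\bigr|^2G_\lambda$ rather than $-2\int_{\Omega'}|\partial_tu|^2G_\lambda$, and that this single square must be split --- via the weighted Cauchy--Schwarz ``variance'' step you allude to, i.e.\ projecting the drift-corrected derivative off its component along $u$ --- so that one part dominates the $2N_{\lambda,r}^2$ term arising from $-DH'/H^2$ while the orthogonal remainder absorbs the cross terms in $g=\partial_tu-\Delta u$, yielding the stated inequality (in fact with constant $1/2$ on the source term).
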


\medskip

Based on Proposition \ref{lemma-2.2}, we then could obtain a global interpolation inequality
for solutions of \eqref{1.1} at one point of time variable.

\begin{proposition}\label{2.3} Assume that $(H_1)$ and $(H_2)$ hold. Then there are two constants $C_{3}\triangleq C_{3}(r_2)>0$ and
$\theta\triangleq\theta(r_1,r_2)\in (0,1)$ so that for any $n\in\mathbb N$ and any $\varphi_{0}\in L^{2}(\Omega_n)$,
the solution $\varphi_n$ of (\ref{1.1}) satisfies
 \begin{equation}\label{3.33333}
\int_{\Omega_n}|\varphi_n(x,T)|^{2}\mathrm{d}x\leq e^{C_{3}
\left(T^{-1}+T+T\|a\|_{\infty}+\|a\|_{\infty}^{2/3}\right)}
 \left(\int_{\Omega_n}|\varphi_{0}(x)|^{2}\mathrm{d}x\right)^{\theta}
\left(\int_{\omega\bigcap \Omega_n}|\varphi_n(x,T)|^{2}\mathrm{d}x\right)^{1-\theta}.
 \end{equation}
\end{proposition}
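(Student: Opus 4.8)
The plan is to deduce the global interpolation inequality \eqref{3.33333} from the local quantitative estimate of Proposition \ref{lemma-2.2} by summing it over the equidistributed centers $\{x_i\}_{i\in I_n}$ and exploiting the geometry encoded in $(H_1)$ and $(H_2)$. First I would fix $\delta=1$ and choose $R=\sqrt{N}\,r_2$, so that each tiling cube satisfies $Q_{r_2}(x_i)\subset B_R(x_i)$; since $\Omega_n=\mathrm{int}\big(\bigcup_{i\in I_n}Q_{r_2}(x_i)\big)$ by $(H_2)$ and every $x_i$ with $i\in I_n$ lies in $\Omega_n$, the balls $\{B_R(x_i)\}_{i\in I_n}$ then cover $\Omega_n$. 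Taking $r=r_1$, so that $B_{r_1}(x_i)\subset\omega_i\subset\omega$ by $(H_1)$, Proposition \ref{lemma-2.2} applies at each $x_0=x_i$, $i\in I_n$, with constants $C_1,C_2,\gamma$ and an exponent that are all independent of $n$.

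I would then write the local estimate at $x_i$ as $a_i\leq (C_1 e^{A} b_i)^{\gamma}(2c_i)^{1-\gamma}$, where $a_i=\int_{B_R(x_i)\cap\Omega_n}|\varphi_n(T)|^2$, $b_i=\int_{T/2}^T\int_{Q_{2R_0}(x_i)\cap\Omega_n}\varphi_n^2$, $c_i=\int_{B_{r_1}(x_i)\cap\Omega_n}|\varphi_n(T)|^2$, and $A$ denotes the exponent appearing in Proposition \ref{lemma-2.2}, which obeys $A\leq C'(1+T^{-1}+\|a\|_\infty^{2/3}+T\|a\|_\infty)$ with $C'=C'(r_1,r_2)$. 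Summing over $i\in I_n$ and applying the discrete H\"older inequality with exponents $1/\gamma$ and $1/(1-\gamma)$ gives $\sum_i a_i\leq (C_1 e^A)^\gamma 2^{1-\gamma}\big(\sum_i b_i\big)^\gamma\big(\sum_i c_i\big)^{1-\gamma}$.

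Next I would control the three sums by genuinely global quantities, uniformly in $n$. The covering property yields $\int_{\Omega_n}|\varphi_n(T)|^2\leq \sum_i a_i$. The balls $B_{r_1}(x_i)$ sit inside the pairwise disjoint cubes $Q_{r_2}(x_i)$ and inside $\omega$, so $\sum_i c_i\leq \int_{\omega\cap\Omega_n}|\varphi_n(T)|^2$. The enlarged cubes $Q_{2R_0}(x_i)$ have bounded overlap, each point of $\Omega_n$ lying in at most $M=M(r_2,\delta,N)$ of them, whence $\sum_i b_i\leq M\int_{T/2}^T\int_{\Omega_n}\varphi_n^2$. A standard energy estimate for \eqref{1.1} (multiply by $\varphi_n$, integrate in space, and use $|a|\leq\|a\|_\infty$) gives $\|\varphi_n(t)\|_{L^2(\Omega_n)}^2\leq e^{2\|a\|_\infty t}\|\varphi_0\|_{L^2(\Omega_n)}^2$, so $\int_{T/2}^T\int_{\Omega_n}\varphi_n^2\leq \tfrac{T}{2}e^{2\|a\|_\infty T}\|\varphi_0\|_{L^2(\Omega_n)}^2$. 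Combining these estimates produces \eqref{3.33333} with $\theta=\gamma$; collecting the prefactors and using $\gamma A+2\gamma\|a\|_\infty T+\gamma\log(MT/2)\leq C_3(T^{-1}+T+T\|a\|_\infty+\|a\|_\infty^{2/3})$, where the logarithmic term is absorbed into $T+T^{-1}$, identifies the claimed exponent.

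The step I expect to be the crux is the summation itself: after invoking the local inequality at each of the $\mathrm{Card}(I_n)$ centers, the discrete H\"older inequality together with the disjointness of the observation balls $B_{r_1}(x_i)$ and the finite overlap of the enlarged cubes $Q_{2R_0}(x_i)$ is precisely what prevents the number of centers from entering the final constant. This, combined with the fact that every constant produced by Proposition \ref{lemma-2.2} depends only on $r_1,r_2$ (and $N$) and not on $n$, is what delivers a bound uniform in the size of $\Omega_n$, and hence uniform in $n$.
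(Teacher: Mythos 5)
Your proposal is correct and follows essentially the same route as the paper's proof: apply Proposition \ref{lemma-2.2} at each center $x_i$ with $r=r_1$, $R=\sqrt{N}r_2$, sum over $i\in I_n$ using the tiling of $\Omega_n$, the disjointness of the cubes containing $B_{r_1}(x_i)\subset\omega$, and the bounded overlap of the enlarged cubes, and conclude with the energy estimate $\|\varphi_n(t)\|_{L^2(\Omega_n)}^2\leq e^{2\|a\|_\infty t}\|\varphi_0\|_{L^2(\Omega_n)}^2$. The only cosmetic differences are that you sum the multiplicative local estimates directly via the discrete H\"older inequality, whereas the paper first converts each one to additive form by Young's inequality with a free parameter $\varepsilon$ and optimizes at the end (an equivalent device), and you take $\delta=1$ where the paper takes $\delta=1/2$.
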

\begin{proof}
According to  Proposition~\ref{lemma-2.2} (where $x_0, r, R$ and $\delta$ are replaced by  $x_i\in \Omega_n, r_1, \sqrt{N}r_2$
and $1/2$, respectively), we obtain
\begin{eqnarray*}
 &&\int_{\mbox{int}(Q_{r_2}(x_{i}))}|\varphi_n(x,T)|^{2}\mathrm{d}x
 \leq \int_{B_{\sqrt{N}r_2}(x_{i})\bigcap \Omega_n}|\varphi_n(x,T)|^{2}\mathrm{d}x\\
 &\leq&\left[\mathcal{\widehat{K}}_{1}e^{[1+2C(1+r_2^{-2})](1+4T^{-1}+
 \|a\|^{2/3}_{\infty})+\mathcal{\widehat{K}}_{2}T^{-1}
 +2T\|a\|_{\infty}}\int_{T/2}^{T}\int_{Q_{4\sqrt{N}r_2}(x_{i})\bigcap \Omega_n}\varphi_n^{2}\mathrm{d}x\mathrm{d}t\right]^{\theta}\\
 &&\times\left[2\int_{B_{r_1}(x_{i})\bigcap \Omega_n}|\varphi_n(x,T)|^{2}\mathrm{d}x\right]^{1-\theta},
 \end{eqnarray*}
where $\mathcal{\widehat{K}}_{1}\triangleq\mathcal{\widehat{K}}_{1}(r_2)>0,
\mathcal{\widehat{K}}_{2}\triangleq\mathcal{\widehat{K}}_{2}(r_2)>0$ and
$\theta\triangleq\theta(r_1,r_2)\in (0,1).$ This, along with Young's  inequality,
implies that for each $\varepsilon>0,$
\begin{eqnarray*}
 &&\int_{\mbox{int}(Q_{r_2}(x_{i}))}|\varphi_n(x,T)|^{2}\mathrm{d}x\\
 &\leq&\varepsilon\theta\mathcal{\widehat{K}}_{1}e^{[1+2C(1+r_2^{-2})]
 (1+4T^{-1}+\|a\|^{2/3}_{\infty})+\mathcal{\widehat{K}}_{2}T^{-1}+2T\|a\|_{\infty}}
 \int_{T/2}^{T}\int_{Q_{4\sqrt{N}r_2}(x_{i})\bigcap \Omega_n}\varphi_n^{2}\mathrm{d}x\mathrm{d}t\\
 &&+2\varepsilon^{-\frac{\theta}{1-\theta}}(1-\theta)\int_{B_{r_1}(x_{i})\bigcap \Omega_n}
 |\varphi_n(x,T)|^{2}\mathrm{d}x.
 \end{eqnarray*}
By $(H_2)$, we have
\begin{equation}\label{3.44444}
\begin{array}{lll}
 &&\displaystyle{}\int_{\Omega_n}|\varphi_n(x,T)|^{2}\mathrm{d}x
 =\sum_{i\in I_n}\int_{\mbox{int}(Q_{r_2}(x_{i}))}|\varphi_n(x,T)|^{2}\mathrm{d}x\\
 \\
 &\leq&\displaystyle{}\varepsilon\theta\mathcal{\widehat{K}}_{1}
 e^{[1+2C(1+r_2^{-2})](1+4T^{-1}+\|a\|^{2/3}_{\infty})
 +\mathcal{\widehat{K}}_{2}T^{-1}+2T\|a\|_{\infty}}\sum_{i\in I_n}\int_{T/2}^{T}
 \int_{Q_{4\sqrt{N}r_2}(x_{i})\bigcap \Omega_n}\varphi_n^{2}\mathrm{d}x\mathrm{d}t\\
 \\
 &&\displaystyle{}+2\varepsilon^{-\frac{\theta}{1-\theta}}(1-\theta)
 \int_{\omega\bigcap \Omega_n}|\varphi_n(x,T)|^{2}\mathrm{d}x.
\end{array}
\end{equation}
Denote by
\begin{equation*}
{\widetilde{\varphi}}_n(x,t)\triangleq
\left\{
\begin{array}{lll}
\varphi_n(x,t)&\mbox{if}&(x,t)\in \Omega_n\times (0,T),\\
0&\mbox{if}&(x,t)\in (\mathbb{R}^N\setminus \Omega_n)\times (0,T).
\end{array}\right.
\end{equation*}
We can directly check that
\begin{eqnarray*}
&&\sum_{i\in I_n}\int_{T/2}^{T}
 \int_{Q_{4\sqrt{N}r_2}(x_{i})\bigcap \Omega_n}\varphi_n^{2}\mathrm{d}x\mathrm{d}t\nonumber\\
&\leq&\sum_{i\geq 1} \int_{T/2}^{T}
 \int_{Q_{4\sqrt{N}r_2}(x_{i})}(\chi_{\Omega_n}{\widetilde{\varphi}}_n)^{2}\mathrm{d}x\mathrm{d}t\\
 &\leq&\mathcal{\widehat{K}}_{3} \int_{T/2}^{T}
\int_{\mathbb{R}^{N}}(\chi_{\Omega_n}{\widetilde{\varphi}}_n)^{2}\mathrm{d}x\mathrm{d}t
=\mathcal{\widehat{K}}_{3} \int_{T/2}^{T}
\int_{\Omega_n} \varphi_n^{2}\mathrm{d}x\mathrm{d}t,\nonumber
\end{eqnarray*}
 where $\mathcal{\widehat{K}}_{3}>0$. Then it follows from (\ref{3.44444}) that
 \begin{eqnarray*}
 &&\int_{\Omega_n}|\varphi_n(x,T)|^{2}\mathrm{d}x\\
 &\leq&\varepsilon\theta\mathcal{\widehat{K}}_{1}\mathcal{\widehat{K}}_{3}
 e^{[1+2C(1+r_2^{-2})](1+4T^{-1}+\|a\|^{2/3}_{\infty})
 +\mathcal{\widehat{K}}_{2}T^{-1}+2T\|a\|_{\infty}}
 \int_{T/2}^{T}\int_{\Omega_n}\varphi_n^{2}\mathrm{d}x\mathrm{d}t\\
 &&+2\varepsilon^{-\frac{\theta}{1-\theta}}(1-\theta)
 \int_{\omega\bigcap \Omega_n}|\varphi_n(x,T)|^{2}\mathrm{d}x \;\; \mathrm{for \ each} \ \varepsilon>0.
 \end{eqnarray*}
 This implies
 \begin{equation}\label{3.444440}
\begin{array}{lll}
 &&\displaystyle{}\int_{\Omega_n}|\varphi_n(x,T)|^{2}\mathrm{d}x\\
 \\
 &\leq&\displaystyle{}\left[\mathcal{\widehat{K}}_{1}\mathcal{\widehat{K}}_{3}
 e^{[1+2C(1+r_2^{-2})](1+4T^{-1}+\|a\|^{2/3}_{\infty})
 +\mathcal{\widehat{K}}_{2}T^{-1}+2T\|a\|_{\infty}}
 \int_{T/2}^{T}\int_{\Omega_n}\varphi_n^{2}\mathrm{d}x\mathrm{d}t\right]^{\theta}\\
 \\
 &&\displaystyle{}\times\left[2\int_{\omega\bigcap \Omega_n}|\varphi_n(x,T)|^{2}\mathrm{d}x\right]^{1-\theta}.
\end{array}
\end{equation}

 Noting that
 $$
 \int_{\Omega_n} |\varphi_n(x,t)|^2\,\mathrm dx\leq e^{2\|a\|_{\infty}t}
 \int_{\Omega_n} |\varphi_0(x)|^2\,\mathrm dx\  \ \ \mathrm{for \  each} \  t\in [0,T],
 $$
 by (\ref{3.444440}), we deduce
 \begin{eqnarray*}
 \int_{\Omega_n}|\varphi_n(x,T)|^{2}\mathrm{d}x
 &\leq&\left[\mathcal{\widehat{K}}_{1}\mathcal{\widehat{K}}_{3}
 Te^{[1+2C(1+r_2^{-2})](1+4T^{-1}+\|a\|^{2/3}_{\infty})
 +\mathcal{\widehat{K}}_{2}T^{-1}+2T\|a\|_{\infty}}e^{2T\|a\|_{\infty}}
 \int_{\Omega_n}\varphi_{0}^{2}\mathrm{d}x\right]^{\theta}\\
 &&\times\left[2\int_{\omega\bigcap \Omega_n}|\varphi_n(x,T)|^{2}\mathrm{d}x\right]^{1-\theta}.
 \end{eqnarray*}
 Hence, (\ref{3.33333}) follows from the latter inequality immediately.
\end{proof}

\subsection{Proof of Theorem~\ref{Thm1}}\label{pro}

Now, we are able to present  the proof of Theorem~\ref{Thm1}  by using the telescoping series method.  The proof is similar to that of \cite[Theorem 1.1]{duan-wang-zhang}. Here we only sketch the proof.

\medskip

\noindent\textbf{Proof\ of\ Theorem~\ref{Thm1}}.
Arbitrarily fix $n\in\mathbb N$.
For any $0\leq t_{1}<t_{2}\leq T$, by a translation in the time variable and Proposition~\ref{2.3}, we obtain from Young's inequality that
 \begin{equation}\label{2019-7-9}
 \|\varphi_n(t_{2})\|^{2}_{L^{2}(\Omega_n)}\leq\varepsilon
 \|\varphi_n(t_{1})\|^{2}_{L^{2}(\Omega_n)}+
 \frac{\mathcal{\widetilde{K}}_{1}}{\varepsilon^{\alpha}}e^{\frac{\mathcal{\widetilde{K}}_{2}}{t_{2}-t_{1}}}
 \|\varphi_n(t_{2})\|^{2}_{L^{2}(\omega\bigcap \Omega_n)} \ \ \ \mathrm{for\ each}\ \varepsilon>0,
 \end{equation}
where $\mathcal{\widetilde{K}}_{1}\triangleq e^{\frac{C_3}{1-\theta}
\left(T+T\|a\|_{\infty}+\|a\|_{\infty}^{2/3}\right)},$
$\mathcal{\widetilde{K}}_{2}\triangleq C_{3}/(1-\theta)$
and $\alpha\triangleq\theta/(1-\theta)$.

Let $l$ be a density point of $E$. According to Proposition 2.1 in \cite{Phung-Wang},
for each $\kappa>1$, there exists $l_{1}\in (l,T)$, depending on $\kappa$ and $E$,
so that the sequence $\{l_{m}\}_{m\geq1}$, given by
$$
l_{m+1}=l+\frac{1}{\kappa^{m}}(l_{1}-l),
$$
satisfies
 \begin{equation}\label{3.2525251}
l_{m}-l_{m+1}\leq 3|E\bigcap(l_{m+1},l_{m})|.
 \end{equation}

Next, let $0<l_{m+2}<l_{m+1}\leq t<l_{m}<l_{1}<T$. It follows from (\ref{2019-7-9}) that
\begin{equation}\label{3.2525252}
\|\varphi_n(t)\|^{2}_{L^{2}(\Omega_n)}\leq \varepsilon\|\varphi_n(l_{m+2})\|^{2}_{L^{2}(\Omega_n)}
+\frac{\mathcal{\widetilde{K}}_{1}}{\varepsilon^{\alpha}}
e^{\frac{\mathcal{\widetilde{K}}_{2}}{t-l_{m+2}}}\|\varphi_n(t)\|^{2}_{L^{2}(\omega\bigcap \Omega_n)} \ \mathrm{for\ each}\ \varepsilon>0.
 \end{equation}
By a standard energy estimate,  we have
$$
\|\varphi_n(l_{m})\|_{L^{2}(\Omega_n)}\leq e^{T\|a\|_{\infty}}\|\varphi_n(t)\|_{L^{2}(\Omega_n)}.
$$
This, along with (\ref{3.2525252}), implies
$$
\|\varphi_n(l_{m})\|^{2}_{L^{2}(\Omega_n)}\leq e^{2T\|a\|_{\infty}}\left(\varepsilon\|\varphi_n(l_{m+2})
\|^{2}_{L^{2}(\Omega_n)}+
\frac{\mathcal{\widetilde{K}}_{1}}{\varepsilon^{\alpha}}
e^{\frac{\mathcal{\widetilde{K}}_{2}}{t-l_{m+2}}}\|\varphi_n(t)\|^{2}_{L^{2}(\omega\bigcap \Omega_n)}\right)
\ \mathrm{for\ each}\ \varepsilon>0,
$$
which indicates that
$$\|\varphi_n(l_{m})\|^{2}_{L^{2}(\Omega_n)}
\leq \varepsilon\|\varphi_n(l_{m+2})\|^{2}_{L^{2}(\Omega_n)}
+\frac{\mathcal{\widetilde{K}}_{3}}{\varepsilon^{\alpha}}
e^{\frac{\mathcal{\widetilde{K}}_{2}}{t-l_{m+2}}}\|\varphi_n(t)\|^{2}_{L^{2}(\omega\bigcap \Omega_n)}
 \ \mathrm{for\ each}\ \varepsilon>0,
 $$
where $\mathcal{\widetilde{K}}_{3}=(e^{2T\|a\|_{\infty}})^{1+\alpha}\mathcal{\widetilde{K}}_{1}$.
Integrating the latter inequality over $ E\bigcap(l_{m+1},l_{m})$ gives
\begin{equation}\label{3.2525253}
\begin{array}{lll}
 \displaystyle{}|E\bigcap(l_{m+1},l_{m})|\|\varphi_n(l_{m})\|^{2}_{L^{2}(\Omega_n)}
 &\leq&\displaystyle{}\varepsilon |E\bigcap(l_{m+1},l_{m})|\|\varphi_n(l_{m+2})\|^{2}_{L^{2}(\Omega_n)}\\
 &&\displaystyle{}+\frac{\mathcal{\widetilde{K}}_{3}}
 {\varepsilon^{\alpha}}e^{\frac{\mathcal{\widetilde{K}}_{2}}{l_{m+1}-l_{m+2}}}
 \int_{l_{m+1}}^{l_{m}}\chi_{E}\|\varphi_n(t)\|^{2}_{L^{2}(\omega\bigcap \Omega_n)}\mathrm{d}t
\end{array}
\end{equation}
for each $\varepsilon>0$.

Since $l_{m}-l_{m+1}=(\kappa-1)(l_{1}-l)/\kappa^{m},$ by (\ref{3.2525253}) and (\ref{3.2525251}), we obtain
\begin{eqnarray*}
\|\varphi_n(l_{m})\|^{2}_{L^{2}(\Omega_n)}&\leq& \frac{1}{|E\bigcap(l_{m+1},l_{m})|}
\frac{\mathcal{\widetilde{K}}_{3}}{\varepsilon^{\alpha}}
e^{\frac{\mathcal{\widetilde{K}}_{2}}{l_{m+1}-l_{m+2}}}
\int_{l_{m+1}}^{l_{m}}\chi_{E}\|\varphi_n(t)\|^{2}_{L^{2}(\omega\bigcap \Omega_n)}\mathrm{d}t+\varepsilon \|\varphi_n(l_{m+2})\|^{2}_{L^{2}(\Omega_n)}\\
&\leq&\frac{3\kappa^{m}}{(l_{1}-l)(\kappa-1)}
\frac{\mathcal{\widetilde{K}}_{3}}{\varepsilon^{\alpha}}
e^{\mathcal{\widetilde{K}}_{2}\left(\frac{1}{l_{1}-l}\frac{\kappa^{m+1}}{\kappa-1}\right)}
\int_{l_{m+1}}^{l_{m}}\chi_{E}\|\varphi_n(t)\|^{2}_{L^{2}(\omega\bigcap \Omega_n)}\mathrm{d}t+
\varepsilon \|\varphi_n(l_{m+2})\|^{2}_{L^{2}(\Omega_n)}
 \end{eqnarray*}
 for each $\varepsilon>0$.
This yields
\begin{equation}
\begin{array}{lll}
&&\|\varphi_n(l_{m})\|^{2}_{L^{2}(\Omega_n)}\\
&\leq& \displaystyle{}
 \frac{1}{\varepsilon^{\alpha}}\frac{3}{\kappa}
 \frac{\mathcal{\widetilde{K}}_{3}}{\mathcal{\widetilde{K}}_{2}}
 e^{2\mathcal{\widetilde{K}}_{2}\left(\frac{1}{l_{1}-l}
 \frac{\kappa^{m+1}}{\kappa-1}\right)}\int_{l_{m+1}}^{l_{m}}\chi_{E}
 \|\varphi_n(t)\|^{2}_{L^{2}(\omega\bigcap \Omega_n)}\mathrm{d}t\displaystyle{}+
 \varepsilon \|\varphi_n(l_{m+2})\|^{2}_{L^{2}(\Omega_n)}
\end{array}\label{3.2525254}
\end{equation}
for each $\varepsilon>0$.
Denote by $d\triangleq 2\mathcal{\widetilde{K}}_{2}/[\kappa(l_{1}-l)(\kappa-1)]$.
It follows from (\ref{3.2525254}) that
\begin{eqnarray*}
\varepsilon^{\alpha}e^{-d\kappa^{m+2}}\|\varphi_n(l_{m})\|^{2}_{L^{2}(\Omega_n)}
-\varepsilon^{1+\alpha}e^{-d\kappa^{m+2}}\|\varphi_n(l_{m+2})\|^{2}_{L^{2}(\Omega_n)}
\leq\frac{3}{\kappa}\frac{\mathcal{\widetilde{K}}_{3}}
{\mathcal{\widetilde{K}}_{2}}\int_{l_{m+1}}^{l_{m}}\chi_{E}\|\varphi_n(t)\|^{2}_{L^{2}(\omega\bigcap \Omega_n)}\mathrm{d}t
\end{eqnarray*}
for each $\varepsilon>0$.

Choosing $\varepsilon=e^{-d\kappa^{m+2}}$ in the above inequality gives
\begin{equation}\label{3.25252555}
\begin{array}{lll}
 &&\displaystyle{}e^{-(1+\alpha)d\kappa^{m+2}}\|\varphi_n(l_{m})\|^{2}_{L^{2}(\Omega_n)}
 -e^{-(2+\alpha)d\kappa^{m+2}}\|\varphi_n(l_{m+2})\|^{2}_{L^{2}(\Omega_n)}\\
 &\leq&\displaystyle{}\frac{3}{\kappa}\frac{\mathcal{\widetilde{K}}_{3}}
 {\mathcal{\widetilde{K}}_{2}}\int_{l_{m+1}}^{l_{m}}\chi_{E}\|\varphi_n(t)\|^{2}_{L^{2}(\omega\bigcap \Omega_n)}\mathrm{d}t.
\end{array}
\end{equation}
Taking $\kappa=\sqrt{(\alpha+2)/(\alpha+1)}$ in (\ref{3.25252555}), we then have
\begin{eqnarray*}
e^{-(2+\alpha)d\kappa^{m}}\|\varphi_n(l_{m})\|^{2}_{L^{2}(\Omega_n)}
-e^{-(2+\alpha)d\kappa^{m+2}}\|\varphi_n(l_{m+2})\|^{2}_{L^{2}(\Omega_n)}
\leq \frac{3}{\kappa}\frac{\mathcal{\widetilde{K}}_{3}}{\mathcal{\widetilde{K}}_{2}}
\int_{l_{m+1}}^{l_{m}}\chi_{E}\|\varphi_n(t)\|^{2}_{L^{2}(\omega\bigcap \Omega_n)}\mathrm{d}t.
\end{eqnarray*}
Changing $m$ to $2m'$ and summing the above inequality from $m'=1$ to $\infty$ give the desired result. This finishes the proof of Theorem~\ref{Thm1}.
\qed

\vskip 5mm

\section{Proof of Theorem \ref{null-3}}\label{b2}

By a density argument, we can assume that $f\in C^1$. We will use  the linearized result (i.e., Corollary \ref{appli-2}) and the
 Kakutani-Fan-Glicksberg  fixed point theorem
 (see, e.g., \cite[Theorem 1.14]{Wang-Wang-Xu-Zhang})  to prove Theorem \ref{null-3}.

To this end,  we first define
\begin{equation*}
a(r)\triangleq\left\{
\begin{array}{lll}
\displaystyle{\frac{f(r)}{r}}&\mbox{if}&r\not=0,\\
f^\prime(0)&\mbox{if}&r=0.
\end{array}\right.
\end{equation*}
By $(H_3)$, we have that
\begin{equation}\label{null-4}
|a(r)|\leq L\;\;\mbox{for all}\;\;r\in \mathbb{R}.
\end{equation}

For each $n\geq1$, we set
\begin{equation*}
\mathcal{K}_n\triangleq\{\xi\in L^2(0,T;L^2(\Omega_n))\bigm|\|\xi\|_{L^2(0,T;H_0^1(\Omega_n))}
+\|\xi\|_{W^{1,2}(0,T;H^{-1}(\Omega_n))}\leq c_0\},
\end{equation*}
where $c_0>0$ will be determined later.  For each $\xi\in \mathcal{K}_n$,
we consider the following linear equation:
\begin{equation}\label{null-5}
\left\{
\begin{array}{lll}
\partial_t z-\Delta z+a(\xi(x,t))z=\chi_{\omega\bigcap \Omega_n}\chi_E u&\mbox{in}&\Omega_n\times (0,T),\\
z=0&\mbox{on}&\partial\Omega_n\times (0,T),\\
z(0)=z_0&\mbox{in}&\Omega_n.
\end{array}\right.
\end{equation}
We simply write $z(\cdot)$ for the solution of (\ref{null-5}). According to Corollary~\ref{appli-2},
there is a positive constant $\kappa\triangleq \kappa(r_1,r_2,E,T,L)$ (independent of $z_0$, $n$ and $\xi$) and a control $u$ so that
\begin{equation}\label{null-6}
\|u\|_{L^2(0,T;L^2(\mathbb{R}^N))}\leq \kappa\|z_0\|_{L^2(\mathbb R^N)}\;\;\mbox{and}\;\;z(T)=0.
\end{equation}

For each $n\geq1$, we next define a  set-valued mapping
$$
\Phi_n: \mathcal{K}_n\rightarrow 2^{L^2(0,T;L^2(\Omega_n))}
$$
by setting
\begin{eqnarray*}
\Phi_n(\xi)\triangleq\Big\{z\bigm|\mbox{there exists a control}\;\;u\in L^2(0,T;L^2(\mathbb{R}^N))\\
\mbox{so that (\ref{null-5})
and (\ref{null-6}) hold}\Big\},\;\; \xi\in \mathcal{K}_n.
\end{eqnarray*}
One can easily check that  $\Phi_n(\xi)\not=\emptyset$ for each $\xi\in \mathcal{K}_n$.

\medskip

The rest of the proof will be organized by several steps as follows.

\medskip

Step 1. We show that $\mathcal{K}_n$ is
compact and convex in $L^2(0,T;L^2(\Omega_n))$, and that each $\Phi_n(\xi)$ is convex in $L^2(0,T;L^2(\Omega_n))$.

These can be directly checked.\\

Step 2. We claim that $\Phi_n(\mathcal{K}_n)\subset
\mathcal{K}_n$.

Given $\xi\in
\mathcal{K}_n$, there is a control  $u$ satisfying (\ref{null-5}) and (\ref{null-6}).
By a standard energy estimate method
and by (\ref{null-4})-(\ref{null-6}), we can easily check that
\begin{equation*}
\|z\|_{L^2(0,T;H_0^1(\Omega_n))}+\|z\|_{W^{1,2}(0,T;H^{-1}(\Omega_n))}\leq
c_1\|z_0\|_{L^2(\Omega_n)},
\end{equation*}
for a positive constant $c_1$ (independent of $z_0$, $n$ and $\xi$). Hence,
$$
z\in \mathcal{K}_n\;\;\mbox{if}\;\;c_0=c_1\|z_0\|_{L^2(\Omega_n)}.
$$

Step 3. We show that  Graph($\Phi_n$) is closed.

It suffices to show that $z\in \Phi_n(\xi)$, provided that
$$
\xi_\ell\in \mathcal{K}_n\rightarrow \xi\;\;\mbox{ strongly
in}\;\;L^2(0,T;L^2(\Omega_n))
$$
and
$$
z_\ell\in
\Phi_n(\xi_\ell)\rightarrow z\;\;\mbox{ strongly
in}\;\;L^2(0,T;L^2(\Omega_n)).
$$
 To this end,  we first observe that $\xi\in \mathcal{K}_n$, since
$\mathcal{K}_n$ is convex and closed. Next we
 claim that there exists a subsequence of
$\{\ell\}_{\ell\geq 1}$,  denoted in the same manner, so that
\begin{equation}\label{null-7}
a(\xi_\ell)z_\ell\rightarrow a(\xi)z\;\;\mbox{strongly in}\;\;L^2(0,T;L^2(\Omega_n)).
\end{equation}
Indeed, since
\begin{equation*}
\xi_\ell\rightarrow \xi\;\;\mbox{strongly in}\;\;L^2(0,T;L^2(\Omega_n)),
\end{equation*}
 we have  a subsequence of $\{\ell\}_{\ell\geq 1}$, still
 denoted by itself, so that
\begin{equation*}
\xi_\ell(x,t)\rightarrow \xi(x,t)\;\;\mbox{for a.e.}\;\;(x,t)\in \Omega_n\times (0,T).
\end{equation*}
Then, by the definition of the function $a$,
we conclude that
\begin{equation*}
a(\xi_\ell(x,t))\rightarrow a(\xi(x,t))\;\;\mbox{for a.e.}\;\;(x,t)\in \Omega_n\times (0,T).
\end{equation*}
By this and (\ref{null-4}), we can apply the Lebesgue
Dominated Convergence Theorem
to obtain
 that
\begin{eqnarray*}
&&\|a(\xi_\ell)z_\ell-a(\xi)z\|_{L^2(0,T;L^2(\Omega_n))}^2\\
&\leq&2\|a(\xi_\ell)(z_\ell-z)\|_{L^2(0,T;L^2(\Omega_n))}^2+2\|(a(\xi_\ell)-a(\xi))z\|_{L^2(0,T;L^2(\Omega_n))}^2\\
&\leq&2 L^2\|z_\ell-z\|_{L^2(0,T;L^2(\Omega_n))}^2+2\|(a(\xi_\ell)-a(\xi))z\|_{L^2(0,T;L^2(\Omega_n))}^2\\
&\rightarrow&0.
\end{eqnarray*}
This leads to  (\ref{null-7}).

Finally,  for each $\ell\geq 1$, since $z_\ell\in \Phi_n(\xi_\ell)\subset \mathcal{K}_n$, there is $u_\ell\in
L^2(0,T;L^2(\mathbb{R}^N))$ with
\begin{equation}\label{null-8}
\|u_\ell\|_{L^2(0,T;L^2(\mathbb{R}^N))}\leq
\kappa\|z_0\|_{L^2(\mathbb R^N)},
\end{equation}
so that
\begin{equation}\label{null-9}
\left\{
\begin{array}{lll}
\partial_t z_\ell-\Delta z_\ell+a(\xi_\ell(x,t))z_\ell
=\chi_{\omega\bigcap \Omega_n}\chi_E u_\ell&\mbox{in}&\Omega_n\times (0,T),\\
z_\ell=0&\mbox{on}&\partial\Omega_n\times (0,T),\\
z_\ell(0)=z_0&\mbox{in}&\Omega_n,\\
z_\ell(T)=0&\mbox{in}&\Omega_n
\end{array}\right.
\end{equation}
and
\begin{equation}\label{null-10}
\|z_\ell\|_{L^2(0,T;H_0^1(\Omega_n))}+
\|z_\ell\|_{W^{1,2}(0,T;H^{-1}(\Omega_n))}\leq c_0.
\end{equation}
According to (\ref{null-8}) and (\ref{null-10}), there is a
control $u$ and a subsequence of $\{\ell\}_{\ell\geq 1}$,  denoted in the same manner, so that
\begin{equation}\label{null-11}
u_\ell\rightarrow u\;\;\mbox{weakly  in}\;\;L^2(0,T;L^2(\mathbb{R}^N)),
\end{equation}
\begin{equation}\label{null-12}
z_\ell\rightarrow z\;\;\mbox{weakly in}\;\;L^2(0,T;H_0^1(\Omega_n))\bigcap W^{1,2}(0,T;H^{-1}(\Omega_n)),\\
\end{equation}
and
\begin{equation}\label{null-13}
z_\ell(T)\rightarrow z(T)\;\;\mbox{strongly in}\;\;L^2(\Omega_n).
\end{equation}
Passing to the limit for $\ell\rightarrow +\infty$ in
(\ref{null-8}) and (\ref{null-9}), making use of
(\ref{null-7}) and (\ref{null-11})-(\ref{null-13}),
we obtain that $z\in \Phi_n(\xi)$.\\

Step 4. We apply the
 Kakutani-Fan-Glicksberg Theorem to end the proof.

From the conclusions in the above three steps, we find that the map $\Phi_n$ satisfies
conditions of  the
 Kakutani-Fan-Glicksberg Theorem. Thus we can apply this theorem
to conclude that there exists  $z\in\mathcal K_n$ so that $z\in \Phi_n(z)$.
Then, by the definition of $\Phi_n$ and using the fact that
\begin{equation*}
a(z(x,t))z(x,t)=f(z(x,t)),
\end{equation*}
one can finish the proof of Theorem~\ref{null-3}.
\qed

\section{Proof of Theorem \ref{null-2}}\label{b5}
For the sake of completeness\footnote{Although we believe that the existence and uniqueness of weak solutions for nonlinear heat equations in $\mathbb R^N$ have been well established in the literature, we did not yet find the exact reference with precise proofs.}, we first consider the well-posedness of the following non-homogeneous  semilinear heat equation  in $\mathbb R^N$:
\begin{equation}\label{null-1-16}
\left\{
\begin{array}{lll}
\partial_t y-\Delta y+f(y)=g&\mbox{in}&\mathbb{R}^N\times (0,T),\\
y(0)=y_0&\mbox{in}&\mathbb{R}^N,
\end{array}\right.
\end{equation}
where $y_0\in L^2(\mathbb{R}^N)$, $g\in L^2(\mathbb{R}^N\times (0,T))$,
and  the nonlinearity $f$ satisfies the assumption $(H_3)$.

\begin{Definition}\label{weak solution}
We say that $y$ is a weak solution of (\ref{null-1-16}) if
 \begin{description}
\item[($i$)] $y\in L^2(0,T;H^1(\mathbb{R}^N))\bigcap W^{1,2}(0,T;H^{-1}(\mathbb{R}^N))\subset C([0,T];L^2(\mathbb{R}^N))$
and $y(0)=y_0$;

\item[($ii$)] For each $\psi\in H^1(\mathbb{R}^N)$, the following equality holds:
\begin{equation*}
\langle \partial_t y(t), \psi\rangle_{H^{-1}(\mathbb{R}^N),H^1(\mathbb{R}^N)}
+\langle \nabla y(t), \nabla \psi\rangle_{L^2(\mathbb{R}^N)}
+\langle f(y(t)), \psi\rangle_{L^2(\mathbb{R}^N)}
=\langle g(t), \psi\rangle_{L^2(\mathbb{R}^N)}
\end{equation*}
for a.e. $t\in (0,T)$.
\end{description}
\end{Definition}

Before presenting the proof of Theorem~\ref{null-2},  we first state the well-posedness of (\ref{null-1-16}) in the sense of the above definition.

\begin{Lemma}\label{Existence}
Under the assumption $(H_3)$,  for each $y_0\in L^2(\mathbb{R}^N)$ and each $g\in L^2(0,T;L^2(\mathbb{R}^N))$, the equation (\ref{null-1-16}) has a unique weak solution.
\end{Lemma}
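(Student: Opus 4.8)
The plan is to realize the weak solution as the unique fixed point of a contraction map built from the linear heat equation, exploiting the global Lipschitz hypothesis $(H_3)$ to circumvent the lack of compactness on $\mathbb R^N$. The crucial structural observation is that, since $f$ is globally Lipschitz with $f(0)=0$, the Nemytskii (superposition) operator $w\mapsto f(w)$ maps $L^2(\mathbb R^N)$ into itself and is globally Lipschitz there, with $\|f(w_1)-f(w_2)\|_{L^2(\mathbb R^N)}\leq L\|w_1-w_2\|_{L^2(\mathbb R^N)}$ and $\|f(w)\|_{L^2(\mathbb R^N)}\leq L\|w\|_{L^2(\mathbb R^N)}$. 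This Lipschitz continuity, rather than any compact embedding, is what will drive the argument.

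First I would set up the linear building block. For fixed $h\in L^2(0,T;L^2(\mathbb R^N))$ and $y_0\in L^2(\mathbb R^N)$, the linear Cauchy problem $\partial_t v-\Delta v=h$, $v(0)=y_0$, admits a unique weak solution $v\in L^2(0,T;H^1(\mathbb R^N))\cap W^{1,2}(0,T;H^{-1}(\mathbb R^N))\subset C([0,T];L^2(\mathbb R^N))$; this follows from the standard variational (Lions) theory applied to the Dirichlet form $a(u,w)=\int_{\mathbb R^N}\nabla u\cdot\nabla w$, which satisfies the G\aa rding inequality $a(u,u)+\|u\|_{L^2(\mathbb R^N)}^2\geq\|u\|_{H^1(\mathbb R^N)}^2$ on $H^1(\mathbb R^N)$. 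Denoting by $\Lambda$ the map sending $w\in C([0,T];L^2(\mathbb R^N))$ to the solution $v$ of this linear problem with source $h=g-f(w)$, a function is a weak solution of (\ref{null-1-16}) if and only if it is a fixed point of $\Lambda$.

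Next I would prove that $\Lambda$ is a contraction on $X\triangleq C([0,T];L^2(\mathbb R^N))$ equipped with the equivalent weighted norm $\|w\|_\lambda\triangleq\sup_{t\in[0,T]}e^{-\lambda t}\|w(t)\|_{L^2(\mathbb R^N)}$. For inputs $w_1,w_2$ the difference $d=\Lambda w_1-\Lambda w_2$ solves the linear equation with source $-(f(w_1)-f(w_2))$ and zero initial datum; testing with $d$, dropping the nonnegative gradient term, and using Young's inequality together with the Lipschitz bound gives
\begin{equation*}
\frac{\mathrm d}{\mathrm dt}\|d(t)\|_{L^2(\mathbb R^N)}^2\leq\|d(t)\|_{L^2(\mathbb R^N)}^2+L^2\|(w_1-w_2)(t)\|_{L^2(\mathbb R^N)}^2 .
\end{equation*}
Integrating from $0$ and inserting the exponential weight yields $\|d\|_\lambda^2\leq\frac{L^2}{2\lambda-1}\|w_1-w_2\|_\lambda^2$, so for $\lambda$ large (depending only on $L$) the map $\Lambda$ is a strict contraction on all of $[0,T]$ at once, with no need to iterate over short subintervals. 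The Banach fixed point theorem then produces a unique $y\in X$ with $y=\Lambda(y)$, giving existence and uniqueness simultaneously. Since $f(y)\in L^2(0,T;L^2(\mathbb R^N))$, the linear regularity recorded above upgrades $y$ to $L^2(0,T;H^1(\mathbb R^N))\cap W^{1,2}(0,T;H^{-1}(\mathbb R^N))$, and checking the weak formulation $(ii)$ of Definition~\ref{weak solution} is then routine.

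The step I expect to require the most care is the linear theory on the unbounded domain $\mathbb R^N$: the form $a$ is not coercive on $H^1(\mathbb R^N)$ by itself, so one must invoke the G\aa rding shift and verify that the variational identities and energy estimates hold globally in space (equivalently, one may work with the analytic heat semigroup $e^{t\Delta}$ on $L^2(\mathbb R^N)$ and Duhamel's formula). Conceptually, however, the essential point is that the global Lipschitz bound turns $w\mapsto f(w)$ into a contraction-compatible perturbation, so that --- in contrast to the bounded-domain Galerkin approach --- no Aubin--Lions compactness argument is needed, which is precisely the feature emphasized in the remarks preceding the lemma.
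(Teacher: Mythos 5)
Your proof is correct, but it follows a genuinely different route from the paper's. The paper constructs the solution of (\ref{null-1-16}) by exhaustion: it solves the Dirichlet problem (\ref{null-14}) on the increasing bounded domains $\Omega_n$ from $(H_2)$, derives the uniform energy bound (\ref{null-17}), extends by zero, extracts a weak limit $y^*$, and passes to the limit in the nonlinear term via the \emph{local} strong convergence (\ref{null-20-1}) in $L^2(0,T;L^2(B_M(0)))$ (an Aubin--Lions-type compactness on bounded balls, not on all of $\mathbb R^N$), verifying the initial condition with the test functions $\varphi(x,t)=(T-t)\psi(x)/T$ and proving uniqueness separately by Gronwall. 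You instead recast a weak solution as the unique fixed point of the solution map $\Lambda$ for the linear heat equation with source $g-f(w)$, and obtain a global-in-time contraction on $C([0,T];L^2(\mathbb R^N))$ under the weighted (Bielecki) norm $\|w\|_\lambda=\sup_t e^{-\lambda t}\|w(t)\|_{L^2}$; your contraction estimate and the choice $2\lambda>L^2+1$ check out, the equivalence between fixed points and weak solutions is justified by uniqueness for the linear problem, and the energy identity you use is valid in the Lions framework on $\mathbb R^N$. What each approach buys: yours is shorter, yields existence and uniqueness in one stroke, and is truly compactness-free, arguably realizing the paper's stated philosophy more literally than the paper's own lemma proof does. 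The paper's longer exhaustion argument, however, is chosen deliberately: as the remark following Lemma~\ref{Existence} indicates, the same limiting scheme is reused verbatim in the proof of Theorem~\ref{null-2}, where the solution \emph{must} be obtained as the limit of the controlled solutions $y_n$ on $\Omega_n$ with only weakly convergent controls $u_n$, a situation in which no contraction formulation is available; your fixed-point argument proves the lemma but would not supply that template.
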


\begin{remark}
The argument below is also effective in our proof of Theorem \ref{null-2}. Indeed, it is inspired by the approach in \cite[Section 4.4]{RRS16} constructing weak solutions for the Navier-Stokes equations on the whole space.  The main idea there is to use solutions of the Navier-Stokes equations on a sequence of expanding bounded domains as a sequence of approximate solutions on the whole space and show the convergence of such
solutions.
\end{remark}

\begin{proof} For each $n\geq 1$, we shall consider the equation
\begin{equation}\label{null-14}
\left\{
\begin{array}{lll}
\partial_t y_n-\Delta y_n+f(y_n)=g&\mbox{in}&\Omega_n\times (0,T),\\
y_n=0&\mbox{on}&\partial\Omega_n\times (0,T),\\
y_n(0)=y_0&\mbox{in}&\Omega_n,
\end{array}\right.
\end{equation}
where $\Omega_n$ is constructed  as in $(H_2)$. By $(H_3)$, (\ref{null-14}), and
 a standard energy estimate method, we can easily check that
\begin{equation}\label{null-17}
\|y_n\|_{L^2(0,T;H_0^1(\Omega_n))}+\|y_n\|_{W^{1,2}(0,T;H^{-1}(\Omega_n))}\leq C.
\end{equation}
Here and throughout the proof of this lemma, $C$ denotes a  positive constant independent of $n$.

We extend $y_n$ to $\mathbb{R}^N\times (0,T)$ by zero and still denote this extension by $y_n$.
On one hand, by (\ref{null-17}), there is a subsequence of $\{n\}_{n\geq 1}$, still denoted by itself,
and $y^*\in L^2(0,T;H^1(\mathbb{R}^N))$, so that
\begin{equation}\label{null-18}
y_n\rightarrow y^*\;\;\mbox{weakly in}\;\;L^2(0,T;H^1(\mathbb{R}^N)).
\end{equation}
On the other hand, for each $M>0$, there exists a positive integer $n_0(M)$ so that
\begin{equation*}
H^{-1}(\Omega_n)\subset H^{-1}(B_M(0))\;\;
\mbox{and}\;\;\|\cdot\|_{H^{-1}(B_M(0))}\leq \|\cdot\|_{H^{-1}(\Omega_n)}
\;\;\mbox{for all}\;\;n\geq n_0(M).
\end{equation*}
These, along with (\ref{null-17}) and (\ref{null-18}), imply that
$y^*\in W^{1,2}(0,T;H^{-1}(B_M(0)))$ and there is a subsequence of $\{n\}_{n\geq 1}$,
denoted in the same manner, so that
\begin{equation}\label{null-20}
y_n\rightarrow y^*\;\;\mbox{weakly in}\;\;L^2(0,T;H^1(\mathbb{R}^N))\bigcap W^{1,2}(0,T;H^{-1}(B_M(0))),
\end{equation}
and
\begin{equation}\label{null-20-1}
y_n\rightarrow y^*\;\;\mbox{strongly in}\;\;L^2(0,T;L^2(B_M(0))).
\end{equation}
It follows from $(H_3)$ and (\ref{null-20-1}) that
\begin{equation}\label{null-21}
f(y_n)\rightarrow f(y^*)\;\;\mbox{strongly in}\;\;L^2(0,T;L^2(B_M(0))).
\end{equation}

Arbitrarily fix $\varphi\in C_0^\infty(\mathbb{R}^N\times(0,T))$.
Let $M>0$ be large enough so that the support of $\varphi$ is contained in
$B_M(0)\times(0,T)$. Then for all $n\geq n_0(M)$, we have that
\begin{equation}\label{null-22}
\begin{array}{lll}
&&-\displaystyle{\int_0^T} \langle y_n, \partial_t \varphi\rangle_{L^2(B_M(0))}\mathrm dt
+\displaystyle{\int_0^T} \langle \nabla y_n, \nabla \varphi\rangle_{L^2(B_M(0))}\mathrm dt
+\displaystyle{\int_0^T} \langle f(y_n), \varphi\rangle_{L^2(B_M(0))}\mathrm dt\\
&=&\displaystyle{\int_0^T} \langle g, \varphi\rangle_{L^2(B_M(0))}\mathrm dt.
\end{array}
\end{equation}
Passing to the limit for $n\rightarrow \infty$ in (\ref{null-22}), by (\ref{null-20}) and (\ref{null-21}),
we obtain that
\begin{equation*}
\begin{array}{lll}
&&-\displaystyle{\int_0^T} \int_{\mathbb R^N} y^* \partial_t \varphi\;\mathrm dx\mathrm dt
+\displaystyle{\int_0^T} \int_{\mathbb R^N} \nabla y^*\cdot\nabla \varphi\;\mathrm dx\mathrm dt
+\displaystyle{\int_0^T} \int_{\mathbb R^N} f(y^*) \varphi\;\mathrm dx\mathrm dt\\
&=&\displaystyle{\int_0^T} \int_{\mathbb R^N}g \varphi\;\mathrm dx\mathrm dt,
\end{array}
\end{equation*}
which indicates that
\begin{equation}\label{null-23}
\partial_t y^*=\Delta y^*-f(y^*)+g\;\;\mbox{in the sense of distribution}.
\end{equation}
Since $\Delta y^*-f(y^*)+g\in L^2(0,T;H^{-1}(\mathbb{R}^N))$, it follows from (\ref{null-23})
that $\partial_t y^*\in  L^2(0,T;H^{-1}(\mathbb{R}^N))$. Hence,
\begin{equation}\label{null-23-1}
y^*\in  L^2(0,T;H^1(\mathbb{R}^N))\bigcap W^{1,2}(0,T;H^{-1}(\mathbb{R}^N))\subset C([0,T];L^2(\mathbb{R}^N)).
\end{equation}
This, along with (\ref{null-23}) and a density argument, implies that for each $\psi\in H^1(\mathbb{R}^N)$,
\begin{equation}\label{null-23-2}
\langle \partial_t y^*(t), \psi\rangle_{H^{-1}(\mathbb{R}^N),H^1(\mathbb{R}^N)}
+\langle \nabla y^*(t), \nabla \psi\rangle_{L^2(\mathbb{R}^N)}
+\langle f(y^*(t)), \psi\rangle_{L^2(\mathbb{R}^N)}
=\langle g(t), \psi\rangle_{L^2(\mathbb{R}^N)}
\end{equation}
for a.e. $t\in (0,T)$.\\

We next show that
\begin{equation}\label{null-23-3}
y^*(0)=y_0.
\end{equation}
To this end, we arbitrarily fix $\psi\in C_0^\infty(\mathbb{R}^N)$.
Let $M>0$ be large enough so that the support of $\psi$ is contained in
$B_M(0)$. Set $\varphi(x,t)\triangleq (T-t)\psi(x)/T$. Then for all $n\geq n_0(M)$, multiplying
both sides of (\ref{null-14}) by $\varphi$ and integrating it over $\mathbb{R}^N\times (0,T)$, we obtain that
\begin{equation}\label{null-24}
\begin{array}{lll}
&&-\displaystyle{\int_0^T} \langle y_n, \partial_t \varphi\rangle_{L^2(B_M(0))}\mathrm dt
+\displaystyle{\int_0^T} \langle \nabla y_n, \nabla \varphi\rangle_{L^2(B_M(0))}\mathrm dt
+\displaystyle{\int_0^T} \langle f(y_n), \varphi\rangle_{L^2(B_M(0))}\mathrm dt\\
&=&\displaystyle{\int_0^T} \langle g, \varphi\rangle_{L^2(B_M(0))}\mathrm dt
+\langle y_0, \psi\rangle_{L^2(B_M(0))}.
\end{array}
\end{equation}
Passing to the limit for $n\rightarrow \infty$ in (\ref{null-24}), by (\ref{null-20}) and (\ref{null-21}),
we obtain that
\begin{equation}\label{null-25}
\begin{array}{lll}
&&-\displaystyle{\int_0^T} \langle y^*, \partial_t \varphi\rangle_{L^2(\mathbb{R}^N)}\mathrm dt
+\displaystyle{\int_0^T} \langle \nabla y^*, \nabla \varphi\rangle_{L^2(\mathbb{R}^N)}\mathrm dt
+\displaystyle{\int_0^T} \langle f(y^*), \varphi\rangle_{L^2(\mathbb{R}^N)}\mathrm dt\\
&=&\displaystyle{\int_0^T} \langle g, \varphi\rangle_{L^2(\mathbb{R}^N)}\mathrm dt
+\langle y_0, \psi\rangle_{L^2(\mathbb{R}^N)}.
\end{array}
\end{equation}
Moreover, multiplying both sides of (\ref{null-23-2}) by $(T-t)/T$ and integrating it over $(0,T)$, we have that
\begin{equation}\label{null-26}
\begin{array}{lll}
&&-\displaystyle{\int_0^T} \langle y^*, \partial_t \varphi\rangle_{L^2(\mathbb{R}^N)}\mathrm dt
+\displaystyle{\int_0^T} \langle \nabla y^*, \nabla \varphi\rangle_{L^2(\mathbb{R}^N)}\mathrm dt
+\displaystyle{\int_0^T} \langle f(y^*), \varphi\rangle_{L^2(\mathbb{R}^N)}\mathrm dt\\
&=&\displaystyle{\int_0^T} \langle g, \varphi\rangle_{L^2(\mathbb{R}^N)}\mathrm dt
+\langle y^*(0), \psi\rangle_{L^2(\mathbb{R}^N)}.
\end{array}
\end{equation}
It follows from (\ref{null-25}) and (\ref{null-26}) that
\begin{equation*}
\langle y^*(0)-y_0,\psi\rangle_{L^2(\mathbb{R}^N)}=0\;\;\mbox{for each}\;\;\psi\in C_0^\infty(\mathbb{R}^N),
\end{equation*}
which indicates (\ref{null-23-3}).\\

By (\ref{null-23-1})--(\ref{null-23-3}) and Definition~\ref{weak solution},
we see that $y^*$ is a weak solution of (\ref{null-1-16}). Finally, we show the uniqueness of the weak solution.
WLOG, we assume that $\widetilde{y}$ is also a weak solution of (\ref{null-1-16}). According to Definition~\ref{weak solution},
it holds that
\begin{equation}\label{null-wang-20}
\begin{array}{l}
\langle \partial_t (y^*-\widetilde{y})(t), (y^*-\widetilde{y})(t)\rangle_{H^{-1}(\mathbb{R}^N),H^1(\mathbb{R}^N)}\\
+\langle \nabla (y^*-\widetilde{y})(t), \nabla (y^*-\widetilde{y})(t)\rangle_{L^2(\mathbb{R}^N)}
+\langle f(y^*(t))-f(\widetilde{y}(t)), (y^*-\widetilde{y})(t)\rangle_{L^2(\mathbb{R}^N)}=0
\end{array}
\end{equation}
for a.e. $t\in (0,T)$, and $y^*(0)=\widetilde{y}(0)=y_0$. Integrating (\ref{null-wang-20}) over $(0,t)$, we obtain that
\begin{equation*}
\|(y^*-\widetilde{y})(t)\|_{L^2(\mathbb{R}^N)}^2\leq 2 L \int_0^t \|(y^*-\widetilde{y})(s)\|_{L^2(\mathbb{R}^N)}^2\mathrm ds.
\end{equation*}
By Gronwall's inequality, we obtain from the latter inequality that
$y^*=\widetilde{y}$.\\

In summary, we finish the proof of Lemma~\ref{Existence}.
\end{proof}

Now, we are able to present the proof of Theorem~\ref{null-2}.\\

\noindent\textbf{Proof\ of\ Theorem~\ref{null-2}}. For each $n\geq 1$, according to Theorem~\ref{null-3},
there is a control $u_n\in L^2(0,T;L^2(\mathbb{R}^N))$ so that
\begin{equation}\label{null-27}
\left\{
\begin{array}{lll}
\partial_t y_n-\Delta y_n+f(y_n)=\chi_\omega  \chi_E u_n&\mbox{in}&\Omega_n\times (0,T),\\
y_n=0&\mbox{on}&\partial\Omega_n\times (0,T),\\
y_n(0)=y_0&\mbox{in}&\Omega_n,
\end{array}\right.
\end{equation}
\begin{equation}\label{null-28}
\|u_n\|_{L^2(0,T;L^2(\mathbb{R}^N))}\leq \kappa \|y_0\|_{L^2(\mathbb{R}^N)}
\end{equation}
and
\begin{equation}\label{null-29}
y_n(T)=0\;\;\mbox{in}\;\;\Omega_n.
\end{equation}
By a standard energy estimate method, $(H_3)$, (\ref{null-27}) and (\ref{null-28}),
we can easily check that
\begin{equation}\label{null-30}
\|y_n\|_{L^2(0,T;H_0^1(\Omega_n))}+\|y_n\|_{W^{1,2}(0,T;H^{-1}(\Omega_n))}\leq C,
\end{equation}
where $C$ is a positive constant independent of $n$.

We extend $y_n$ to $\mathbb{R}^N\times (0,T)$ by $0$ and still denote this extension by $y_n$.
By (\ref{null-30}) and (\ref{null-28}), there is a subsequence of $\{n\}_{n\geq 1}$, still denoted by itself,
and $(u^*, y^*)\in L^2(0,T;L^2(\mathbb{R}^N))\times L^2(0,T;H^1(\mathbb{R}^N))$, so that
\begin{equation}\label{null-31}
y_n\rightarrow y^*\;\;\mbox{weakly in}\;\;L^2(0,T;H^1(\mathbb{R}^N))
\end{equation}
and
\begin{equation}\label{null-32}
u_n\rightarrow u^*\;\;\mbox{weakly in}\;\;L^2(0,T;L^2(\mathbb{R}^N)).
\end{equation}
On one hand, by similar arguments as those in Lemma~\ref{Existence}, we observe that $y^*$ is the unique
weak solution of (\ref{null-wang}) (where $u$ is replaced by $u^*$), and for each $M>0$, there is a subsequence of
$\{n\}_{n\geq 1}$, denoted in the same manner, so that
\begin{equation}\label{null-33}
f(y_n)\rightarrow f(y^*)\;\;\mbox{strongly in}\;\; L^2(0,T;L^2(B_M(0))).
\end{equation}
On the other hand, we arbitrarily fix $\psi\in C_0^\infty(\mathbb{R}^N)$.
Let $M>0$ be large enough so that the support of $\psi$ is contained in
$B_M(0)$. Set $\varphi(x,t)\triangleq t\psi(x)/T$. Then for all $n\geq n_0(M)$ (where $n_0(M)$ is
the same integer as that in Lemma~\ref{Existence}), multiplying
both sides of (\ref{null-27}) by $\varphi$ and integrating it over $\mathbb{R}^N\times (0,T)$, we obtain that
\begin{equation}\label{null-34}
\begin{array}{lll}
&&-\displaystyle{\int_0^T} \langle y_n, \partial_t \varphi\rangle_{L^2(B_M(0))}\mathrm dt
+\displaystyle{\int_0^T} \langle \nabla y_n, \nabla \varphi\rangle_{L^2(B_M(0))}\mathrm dt
+\displaystyle{\int_0^T} \langle f(y_n), \varphi\rangle_{L^2(B_M(0))}\mathrm dt\\
&=&\displaystyle{\int_0^T} \langle \chi_\omega  \chi_E u_n, \varphi\rangle_{L^2(B_M(0))}\mathrm dt
-\langle y_n(T), \psi\rangle_{L^2(B_M(0))}.
\end{array}
\end{equation}
Passing to the limit for $n\rightarrow \infty$ in (\ref{null-34}), by (\ref{null-29}) and (\ref{null-31})-(\ref{null-33}),
we obtain that
\begin{equation}\label{null-35}
\begin{array}{lll}
&&-\displaystyle{\int_0^T} \langle y^*, \partial_t \varphi\rangle_{L^2(\mathbb{R}^N)}\mathrm dt
+\displaystyle{\int_0^T} \langle \nabla y^*, \nabla \varphi\rangle_{L^2(\mathbb{R}^N)}\mathrm dt
+\displaystyle{\int_0^T} \langle f(y^*), \varphi\rangle_{L^2(\mathbb{R}^N)}\mathrm dt\\
&=&\displaystyle{\int_0^T} \langle \chi_\omega  \chi_E u^*, \varphi\rangle_{L^2(\mathbb{R}^N)}\mathrm dt.
\end{array}
\end{equation}
Moreover, since $y^*$ is the weak solution of (\ref{null-wang}) (where $u$ is replaced by $u^*$), we have that
\begin{equation}\label{null-36}
\begin{array}{lll}
&&-\displaystyle{\int_0^T} \langle y^*, \partial_t \varphi\rangle_{L^2(\mathbb{R}^N)}\mathrm dt
+\displaystyle{\int_0^T} \langle \nabla y^*, \nabla \varphi\rangle_{L^2(\mathbb{R}^N)}\mathrm dt
+\displaystyle{\int_0^T} \langle f(y^*), \varphi\rangle_{L^2(\mathbb{R}^N)}\mathrm dt\\
&=&\displaystyle{\int_0^T} \langle \chi_\omega  \chi_E u^*, \varphi\rangle_{L^2(\mathbb{R}^N)}\mathrm dt
-\langle y^*(T), \psi\rangle_{L^2(\mathbb{R}^N)}.
\end{array}
\end{equation}
It follows from (\ref{null-35}) and (\ref{null-36}) that
$\langle y^*(T),\psi\rangle_{L^2(\mathbb{R}^N)}=0$ for each $\psi\in C_0^\infty(\mathbb{R}^N)$.
This implies that $y^*(T)=0$.

Hence, we finish the proof of Theorem~\ref{null-2}.
\qed

\bigskip

\noindent\textbf{Acknowledgments}.
This work was partially supported by the National Natural Science Foundation of China under grants 11771344 and 11971363.  The second author is also partially supported by the Academic Team Building Plan for Young Scholars from Wuhan University under grant 413100085.

\end{document}